\date{\today}
\newcommand{\Z}{{\mathbb Z}}
\newcommand{\bbH}{{\mathbb H}}
\newcommand{\R}{{\mathbb R}}
\newcommand{\C}{{\mathbb C}}
\newcommand{\N}{{\mathbb N}}
\newcommand{\Lip}{\mathop{\mathrm{Lip}}}
\newcommand{\mK}{\mathcal{K}}
\newcommand{\mA}{\mathcal{A}}
\newcommand{\Cn}{M}
\newcommand{\OZ}{\Omega_{\Z}}
\newcommand{\RP}{{\mathbb{RP}}}
\newcommand{\CP}{{\mathbb{CP}}}
\newcommand{\SL}{{\mathrm{SL}}}
\newcommand{\SO}{{\mathrm{SO}}}
\newcounter{mit}
\newcounter{Bit}
\newenvironment{Bitemize}
	{\setcounter{Bit}{0}\begin{itemize}}
	{\end{itemize}}
\newcommand{\bitem}{\refstepcounter{Bit}\item[\theBit]}
\renewcommand{\theBit}{(B\arabic{Bit})}
\newtheorem{theorem}{Theorem}[section]
\newtheorem{lemma}[theorem]{Lemma}
\newtheorem{prop}[theorem]{Proposition}
\theoremstyle{definition}
\newtheorem{remark}[theorem]{Remark}
\newtheorem{defi}[theorem]{Definition}
\newcommand{\mP}{\mathcal{P}}
\newcommand{\mE}{\mathcal{E}}
\newcommand{\mmax}{\widehat{m}}
\def\diam{{\rm diam}}
\def\dist{{\rm dist}}
\def\diam{{\rm diam}}
\def\supp{\mathop{\rm supp}}
\newcommand{\bi}{{\bf i}}
\def\N{{\mathbb N}}
\newcommand{\E}{{\mathbb E}\,}
\newcommand{\Prob}{{\mathbb P}\,}
\def\be{\begin{equation}}
\def\ee{\end{equation}}
\newcommand{\eps}{{\varepsilon}}
\newcommand{\mgr}{\lambda}
\newcommand{\msp}{\mu}
\title[Random Schr\"odinger Operators Defined by Block Factors]{Localization for Random Schr\"odinger Operators Defined by Block Factors}
\author[D.\ Damanik]{David Damanik}
\address{Department of Mathematics, Rice University, Houston, TX~77005, USA}
\email{damanik@rice.edu}
\thanks{D.\ D.\ was supported in part by NSF grants DMS--2054752 and DMS--2349919.}
\author[A.\ Gorodetski]{Anton Gorodetski}
\address{Department of Mathematics, University of California, Irvine, CA~92697, USA}
\email{asgor@uci.edu}
\thanks{A.\ G.\ was supported in part by NSF grant DMS--2247966.}
\author[V. Kleptsyn]{Victor Kleptsyn}
\address{CNRS, Institute of Mathematical Research of Rennes, IRMAR, UMR 6625 du CNRS}
\email{victor.kleptsyn@univ-rennes1.fr}
\thanks{V.K. was supported in part by ANR Gromeov (ANR-19-CE40-0007) and by Centre Henri Lebesgue (ANR-11-LABX-0020-01)}
\begin{document}

\begin{abstract}
We consider discrete one-dimensional Schr\"odinger operators with random potentials obtained via a block code applied to an i.i.d.\ sequence of random variables. It is shown that, almost surely, these operators exhibit spectral and dynamical localization, the latter away from a finite set of exceptional energies. We make no assumptions {beyond non-triviality}, neither on the regularity of the underlying random variables, nor on the linearity, the monotonicity, or even the continuity of the block code. Central to our proof is a reduction to the non-stationary Anderson model via Fubini.
\end{abstract}

\maketitle

\section{Introduction}
Anderson localization is the key phenomenon in the spectral theory of random Schr\"odinger operators. The one-dimensional Anderson model is given by the discrete Schr\"odinger operator
 \begin{equation}\label{e.1DAnderson}
[H\psi] (n) = \psi(n+1) + \psi(n-1) + V(n) \psi(n),
\end{equation}
where $\{V(n)\}_{n\in \Z}$ is a sequence of i.i.d.\ random variables. The localization statement for this operator typically takes two forms. Spectral localization means that almost surely, the operator $H$ has pure point spectrum with exponentially decaying eigenfunctions. Dynamical localization means that the unitary group associated with $H$ has exponential off-diagonal decay relative to the standard orthonormal basis $\{ \delta_n \}_{n \in \Z}$ of $\ell^2(\Z)$ uniformly in time, either in an almost sure sense or in expectation. Spectral localization and dynamical localization are not equivalent for general operators, but they both hold for the Anderson model, see \cite[Chapter~5]{DF24} for a review of these classical results.

The first results on Anderson localization were established for potentials given by i.i.d.\ random variables given by a regular (e.g. absolutely continuous) distribution, e.g. see \cite{KuS, Mol, GMP}. It is natural to ask whether the assumptions on regularity, identical distribution, and even independence of distributions that define the random potential can be relaxed.

Proving the localization statements for the Anderson model is most difficult in the case of a singular single-site distribution, for example the Bernoulli case, where the {support of the} distribution that defines that potential has cardinality~$2$. Indeed,
any existing localization proof that covers the Bernoulli case does also cover the general case. We refer the reader to \cite{CKM} for the first proof of spectral localization in the Bernoulli case, to \cite{SVW} for the second proof, and to \cite{BDFGVWZ, GZ20, GK1, JZh} for recent treatments of this model, which establish both spectral and dynamical localization and are simpler and more conceptual (in that they rely on one-dimensional tools, rather than verifying the input necessary to run a multi-scale analysis).

It turns out that the assumption that the random variables $V(n)$ are identically distributed can also be removed. For a sufficiently regular distribution this can be deduced, for example, from the original Kunz-Soulliard approach \cite{KuS}. For the general 1D Anderson model, including the non-stationary Anderson-Bernoulli case, both spectral and dynamical localization were shown in \cite{GK3}. An alternative path to the proof of spectral localization in the non-stationary case can be found in [SVW]; it passes through estimates for the Green’s function, followed by the usual multi-scale arguments (see \cite[Proposition 3.6]{SVW} and
the remark after it).

On the other hand, one can try to relax the assumption on the  independence of $\{V(n)\}$. Some results in that direction were obtained already in the 1990s, see \cite{DK91}, \cite{AM}. In particular, the so-called alloy-type Anderson model, where the potential is represented as a linear combination of independent random variables indexed by the sites of the lattice with fast decaying coefficients, attracted considerable attention \cite{BK}, \cite{Kir96}, \cite{TV}, \cite{St}, \cite{Kr}, \cite{ETV}, \cite{EKTV}, \cite{ESS}. In \cite{Kl13}, the so-called ``crooked alloy-type random potentials'' are considered; a related ``random displacements model'' was studied in \cite{Ch}.

Here we consider the class of random potentials defined by a block factor without any assumptions on its linearity, monotonicity, or even continuity:

\begin{theorem}\label{t.main}
Let $\{\xi_n\}_{n\in\Z}$ be an i.i.d.\ sequence of random variables defined by a Borel probability distribution $\nu$. Assume that 
the function $g : \R^k \to \R$, $k \in \N$, is bounded, Borel measurable, and essentially non-constant, that is, not {equal to} a constant $\nu^k$-almost everywhere. 
Then, $\nu^{\Z}$-almost surely, the random operator $H: \ell^2(\Z) \to \ell^2(\Z)$ defined by
 \begin{equation}\label{e.model}
 [H \psi] (n) = \psi(n+1) + \psi(n-1) + v_n \psi(n), \ v_n=g(\xi_n, \xi_{n+1}, \ldots, \xi_{n+k-1}),
 \end{equation}
has pure point spectrum {with exponentially decaying eigenfunctions}.
\end{theorem}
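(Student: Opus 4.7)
The plan is a Fubini-type reduction to the non-stationary one-dimensional Anderson model, for which spectral localization is established in \cite{GK3}. The pivotal combinatorial observation is that every window of $k$ consecutive integers contains exactly one multiple of $k$. Accordingly, I partition the i.i.d.\ input by residue class modulo $k$: write $\xi = (\eta, \omega')$ with $\eta = (\eta_j)_{j \in \Z}$, $\eta_j := \xi_{jk}$, and $\omega' := (\xi_m)_{m \notin k\Z}$. After fixing $\omega'$, each $v_n = g(\xi_n, \ldots, \xi_{n+k-1})$ becomes a deterministic function of the single free variable $\eta_{j(n)}$, where $j(n)$ is the unique integer with $j(n)k \in \{n, \ldots, n+k-1\}$. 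Since $n \mapsto j(n)$ is constant on blocks of $k$ consecutive sites, the conditioned potential has a \emph{block-independent} structure: blocks of length $k$ are independent of each other, but inside a block all $k$ potentials are coupled through one $\eta_j$.

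By Fubini, it suffices to show that for $\nu^{\Z \setminus k\Z}$-a.e.\ $\omega'$ the conditioned operator $H_{\omega'}$ has pure point spectrum with exponentially decaying eigenfunctions for $\nu^{k\Z}$-a.e.\ $\eta$. To match the hypotheses of \cite{GK3}, I pass to $k$-step transfer matrices: the block matrix $T_j(E;\eta_j,\omega')$ propagating the Schr\"odinger solution from site $(j-1)k$ to $jk$ depends only on the single independent variable $\eta_j$, so the sequence $\{T_j\}_{j\in\Z}$ is independent across $j$ (though not identically distributed), which is exactly the non-stationary framework handled in \cite{GK3}.

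The non-degeneracy hypothesis needed by \cite{GK3} is verified as follows. Since $g$ is essentially non-constant, a Fubini argument on $\R^k$ produces at least one slot $p^*\in\{1,\ldots,k\}$ and a positive-$\nu^{k-1}$-measure set of ``frozen'' coordinates on which $x_{p^*} \mapsto g(x_1,\ldots,x_k)$ is non-constant. As $n$ ranges over the sites of a single block, the position of $\eta_j$ inside the window $\{n, \ldots, n+k-1\}$ runs over all of $\{1,\ldots,k\}$, so there is always a site where $\eta_j$ sits in slot $p^*$. Combined with the independence of blocks and Borel--Cantelli, this guarantees that, for $\omega'$ in a full-measure set, every block carries a genuinely non-trivial single-parameter family of potentials, which is the input \cite{GK3} requires.

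The principal obstacle is precisely this block-coupling after conditioning: genuine site-level independence is lost and is restored only after reformulating in terms of block transfer matrices. The delicate task is to check that the resulting ``super-site'' distributions satisfy the quantitative hypotheses of \cite{GK3} (non-triviality together with the requisite support/regularity conditions) for $\omega'$-a.e.\ conditioning and for every spectral parameter of interest. It is at this step that the essential non-constancy of $g$ is used decisively; propagating that non-degeneracy through the Fubini slicing is the heart of the argument.
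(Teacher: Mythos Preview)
Your high-level strategy matches the paper's: freeze part of the $\xi_i$, obtain independent block transfer matrices, invoke the non-stationary machinery of \cite{GK3}, and Fubini back. But your specific decomposition fails, and verifying the \cite{GK3} hypotheses is the heart of the argument, not a routine check you can defer.

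Your claim that after conditioning on $\omega' = (\xi_m)_{m \notin k\Z}$ ``every block carries a genuinely non-trivial single-parameter family of potentials'' is false. Take $k=2$, $\nu$ uniform on $\{0,1\}$, $g(x,y) = xy$: block $j$ covers sites $\{2j-1,2j\}$ with $v_{2j-1} = \xi_{2j-1}\eta_j$ and $v_{2j} = \eta_j\,\xi_{2j+1}$, so whenever the frozen values satisfy $\xi_{2j-1} = \xi_{2j+1} = 0$ the entire block is deterministic and its transfer-matrix law is a Dirac mass, violating the measure condition at \emph{every} energy. Borel--Cantelli runs the wrong way here: for a.e.\ $\omega'$ infinitely many blocks are degenerate, and Theorem~\ref{t.vector} requires condition~\ref{B:Furstenberg} for every factor in the product. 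Even for the non-degenerate blocks, ``non-trivial dependence on $\eta_j$'' (just two matrix values in the Bernoulli case) is far weaker than~\ref{B:Furstenberg}; you give no argument excluding a common invariant pair of projective measures. The paper avoids both problems by taking blocks of length $10k$ and freezing only $k$ variables per block, so that the inner random potential vector has many independent coordinates; Proposition~\ref{p:main} and Lemma~\ref{l:three} then show the measure condition can fail only on a \emph{finite} set $\mE$ of energies, and the nested $\Xi_1/\Xi_2/\Xi_3$ construction is arranged so that $\mE$ does not depend on the outermost frozen layer, hence the same finite exceptional set works for all $j$ simultaneously. Your one-random-variable-per-block scheme yields a $j$- and $\omega'$-dependent exceptional set with no control on the union. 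You have also not addressed compactness of the family of block-matrix laws (Lemma~\ref{l:compact}), which is genuinely delicate because $g$ is only Borel measurable and the dependence on the frozen context need not be continuous.
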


In Theorem \ref{t.main} one cannot in general add almost sure exponential decay of eigenfunctions with a uniform rate, which does hold in the {stationary and even the non-stationary} Anderson model. This is due to the fact that in some cases there could be exceptional energies where the Lyapunov exponent vanishes; we give an explicit example in Proposition \ref{p.example} below. {Such energies will belong to the almost sure spectrum, and hence (by upper semi-continuity of the Lyapunov exponent), the exponential decay rate must converge to zero for eigenfunctions with eigenvalues approaching this exceptional energy.} This phenomenon is {akin to the one observed earlier} in the random dimer model \cite{DWP} (see also \cite[Section~5.11]{DF24}) and in the case of potentials defined by a hyperbolic base dynamics \cite{ADZ1}. {Nevertheless, since individual energies are not themselves eigenvalues with probability one, we can still conclude the existence of a basis of exponentially decaying eigenvectors for almost all realizations of the potential.}

{On the other hand, the presence of exceptional energies does mean that we can only get dynamical localization away from them}:

\begin{theorem}\label{t.dynloc}
Under the assumptions of Theorem \ref{t.main}, there exists a finite set $\mathcal{E} \subset \R$ such that for any compact $J \subset \R \setminus \mathcal{E}$, there is $\beta > 0$ such that for any $\varepsilon > 0$, the following holds: $\nu^{\Z}$-almost surely, there is a constant $C> 0$ such that
\begin{equation}\label{eq:edl}
	\sup\limits_{t \in \R}
	|\langle\delta_n, e^{-itH} \chi_{J}(H) \delta_m\rangle|
	\leq C e^{\epsilon |m|} e^{-\beta|n-m|}
\end{equation}
for all $m,n \in \Z$.
\end{theorem}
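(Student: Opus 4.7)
The plan is to derive dynamical localization from the same Fubini-type reduction that underlies Theorem~\ref{t.main}, combined with the dynamical localization statement for the non-stationary Anderson model from \cite{GK3}. The bulk of the new work concentrates in identifying the finite exceptional set $\mathcal{E}$; once it is in hand, the output of \cite{GK3} plugs in directly.

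\emph{Fubini reduction.} Decompose $\{\xi_n\}_{n\in\Z}$ into the skeleton $\xi' = \{\xi_{jk}\}_{j\in\Z}$ and its complement $\xi'' = \{\xi_n\}_{n \notin k\Z}$; these are independent under $\nu^{\Z}$. For each $j\in\Z$ and each $n \in \{jk-k+1, \ldots, jk\}$, the window $\{n, n+1, \ldots, n+k-1\}$ contains exactly one multiple of $k$, namely $jk$, so $v_n = g(\xi_n, \ldots, \xi_{n+k-1})$ is a measurable function of $\xi''$ and the single coordinate $\xi_{jk}$. Hence, conditional on $\xi''$, the potential is a concatenation of independent random length-$k$ blocks, and passing to the $k$-step transfer matrices produces a non-stationary i.i.d.\ $\SL(2,\R)$-cocycle over the skeleton---exactly the setting in which \cite{GK3} establishes dynamical localization.

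\emph{Exceptional energies.} The Lyapunov exponent $L(E)$ of the block cocycle is a deterministic function of $E$, because it depends only on the joint law of the single-block transfer matrices, which is determined by $\nu$ and $g$ alone. The essential non-constancy of $g$, together with Furstenberg's theorem applied to the block cocycle, implies $L(E) > 0$ outside a set $\mathcal{E}$ defined by the algebraic condition that the support of the block cocycle lie in a common compact subgroup of $\SL(2,\R)$. These conditions reduce to polynomial equations in $E$ with degree bounded in terms of $k$, so $\mathcal{E}$ is finite. This is a higher-block analog of the phenomenon for the random dimer model \cite{DWP} and for the hyperbolic-base examples \cite{ADZ1}.

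\emph{Assembly.} Fix compact $J \Subset \R \setminus \mathcal{E}$. Applying the non-stationary Anderson dynamical localization of \cite{GK3} to the block cocycle, conditionally on $\xi''$, yields $\beta = \beta(J) > 0$ such that, for $\nu^{k\Z}$-a.e.\ $\xi'$, estimate \eqref{eq:edl} holds; Fubini then delivers the statement for $\nu^{\Z}$-a.e.\ $\xi_{\Z}$. The main obstacle is the finiteness of $\mathcal{E}$: one must characterize precisely when the block cocycle fails Furstenberg's irreducibility/non-compactness and show that the resulting algebraic constraints carve out only finitely many energies. A secondary issue is verifying that the single-block distributions satisfy the non-degeneracy hypotheses of \cite{GK3} for $\nu^{\Z \setminus k\Z}$-a.e.\ $\xi''$, i.e.\ that the essential non-constancy of $g$ persists after conditioning.
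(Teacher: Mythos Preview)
Your Fubini reduction is in the right spirit, but the ``Exceptional energies'' paragraph contains a real gap. You write that the Lyapunov exponent of the block cocycle ``is a deterministic function of $E$, because it depends only on the joint law of the single-block transfer matrices, which is determined by $\nu$ and $g$ alone.'' This is false in your setup: after you condition on $\xi''$, the law of the $j$-th block transfer matrix depends on the frozen values $\xi''|_{[jk-k+1,jk+k-1]\setminus\{jk\}}$, so different realizations of $\xi''$ produce different non-stationary cocycles with, a priori, different exceptional energy sets. The input needed from \cite{GK3} is not merely positivity of one Lyapunov exponent but that the whole family of conditional block laws sits inside a compact set $\mathcal{K}$ of distributions satisfying \ref{B:Furstenberg}--\ref{B:Monotonicity}. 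Producing a \emph{single} finite $\mathcal{E}$ that works simultaneously for a.e.\ conditioning is precisely the content of Proposition~\ref{p:main} and Lemma~\ref{l:applicable}: the paper defines $\mathcal{E}$ as the essential intersection \eqref{eq:def-mE} of the (possibly $\Xi_2$-dependent) exceptional sets, and then must argue separately that this intersection is finite and that the compactness of $\mathcal{K}$ survives (Lemma~\ref{l:compact}). Your proposal skips both steps.

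Two further points. First, your description of the obstruction---``support of the block cocycle lie in a common compact subgroup''---is only half of the story; Lemma~\ref{l.measureconditionfailure} shows the measure condition can also fail via a common finite set on~$\RP^1$, and in fact that is the case the paper spends the most effort on (Lemma~\ref{l:three}). The finiteness of the exceptional set is not obtained by writing down polynomial equations but by a direct large-$|E|$ argument (Lemma~\ref{l:eps}) showing the condition holds near infinity, combined with the algebraic nature of its failure. Second, with only one live coordinate $\xi_{jk}$ per block, the conditional block law may well be degenerate (a Dirac mass, or supported on too few matrices) for a positive-measure set of $\xi''$; the paper avoids this by taking much longer blocks so that several \emph{independent} unfrozen coordinates sit inside each block (Lemma~\ref{l:p-applicable} manufactures at least five support points with the needed separation). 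Your decomposition does not obviously supply enough randomness per block to run Proposition~\ref{p:main}.
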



The strategy of the proof of Theorems \ref{t.main} and \ref{t.dynloc}, which, we believe, can be adapted to many other models (we mention some in Remark \ref{r.r} below) consists of two major steps. On a hand waving level, the first one is a reduction of a model with random potentials exhibiting finite range dependencies to the case of independent but not identically distributed matrix valued random variables. Namely, let us split the lattice $\Z$ into finite blocks, and freeze the values of $\{\xi_n\}$ in, say, every other block. If the size of the frozen blocks is sufficiently large, the potential values $\{V(n)\}$ also split into ``random'' and ``frozen'' blocks, and  the products of random transfer matrices over different random blocks are independent, but, certainly, not identically distributed, since their laws depend on the values of the frozen variables. If we are able to prove localization for the random model with some frozen values of $\{\xi_n\}$, then by Fubini this implies localization for the initial model almost surely. In order to prove localization in the non-stationary case, one can use the approach derived in \cite{GK3}. One of the conditions that has to be checked is the so called {\it measure condition} (see condition (B1) in Section \ref{ss.introparamfurst}). The second main step is to show that the measure condition must be satisfied for all but a finite number of energies. Once again, on a hand waving level, the idea is to show that the set of energies where this condition is not satisfied is actually algebraic, hence must be the whole line or finite. Since one can show explicitly that for large values of the energy the condition must be satisfied, the set of exceptional energies has to be finite. Another condition that has to be checked is the compactness of the ``pool'' of distributions required in \cite{GK3}. We address that in Lemma~\ref{l:compact}.

\begin{remark}\label{r.r}

\

\begin{enumerate}[a)]
\item Analogs of Theorems \ref{t.main} and \ref{t.dynloc} for half-line operators on $\ell^2(\N)$ also hold, with straightforward modifications of the proofs.

\item The proof of Theorems \ref{t.main} and \ref{t.dynloc} does not use the assumption that the background random variables $\{\xi_n\}$ are real valued in any way. One could replace $\nu$ by a probability distribution on any measure space $X$, take $\{\xi_n\}$ to be a sequence of i.i.d.\ random variables with values in $X$ chosen with respect to that distribution, and take the block function $g:X^k\to \R$ to be any measurable bounded essentially non-constant function.

\item There are other scenarios where a similar approach could be helpful. For example,
one can replace the i.i.d.\ sequence $\{\xi_n\}$ in Theorem \ref{t.main} by a sequence of mutually independent but not identically distributed random variables. In particular, if the collection of distributions we can choose from is finite, an almost verbatim repetition of the proofs of Theorems \ref{t.main} and \ref{t.dynloc} can be applied. Or one can consider a sequence of block maps $g_n:\R^k\to \R$, and define the potential by $v_n=g_n(\xi_n, \xi_{n+1}, \ldots, \xi_{n+k-1})$. It is reasonable to expect that under some suitable conditions that guarantee that the randomness of the potential formed in this way  does not degenerate, analogs of Theorems \ref{t.main} and \ref{t.dynloc} hold.

\item One could consider a block map $g$ of the form $g:\R^k\to \R^m$, and form a potential by concatenation of the blocks produced by the block map. This model would cover, in particular, the random dimer model.

\item The potential in (\ref{e.model}) belongs to the class of ergodic (or dynamically defined) potentials in the sense of \cite{DF22, DF24}.  Indeed, it can be modeled by considering the compact product $\Omega = (\mathrm{supp} \, \nu)^\Z$, the shift transformation $T : \Omega \to \Omega$, $[T \omega](n) = \omega(n+1)$, and the $T$-ergodic measure $\nu^\Z$ on $\Omega$, and the potential can then be written in the form
\begin{equation*}\label{eq:dyndefoper}
v_{n, \omega} = f(T^n \omega),
\end{equation*}
where in the case of the classical Anderson model the sampling function $f : \Omega \to \R$ is given by the evaluation at the origin,
\begin{equation*}\label{eq:AMsamplingfunction}
f(\omega) = \omega(0),
\end{equation*}
while for the potential (\ref{e.model}) given by a block code we have
\begin{equation}\label{eq:AMsamplingfunction2}
f(\omega) = g(\omega(0), \ldots, \omega(k-1)).
\end{equation}

\item In the case where $\mathrm{supp} \, \nu$ is finite, the paper \cite{ADZ1} has developed a method, which in fact applies in greater generality beyond the case of the full shift, to show the positivity of the Lyapunov exponent away from a finite set of energies as soon as the sampling function is locally constant and non-constant on $(\mathrm{supp} \, \nu)^\Z$  (i.e., it is of the form \eqref{eq:AMsamplingfunction2} with a $g$ that is not constant on $(\mathrm{supp} \, \nu)^k$). Localization statements may then be derived from this input, as shown in a follow-up work by the same authors \cite{ADZ2}.

\item The statement of Theorem \ref{t.main} is quite general, and does not require any regularity of the distribution $\nu$, or linearity or monotonicity of the function $g$. At the same time, we would like to point out that there are two assumptions that one can reasonably expect to be unnecessary. First, it is an assumption that $g$ is bounded. One should expect that it can potentially be generalized to the case when $v_n=g(\xi_n, \xi_{n+1}, \ldots, \xi_{n+k-1})$ has some finite  moment, i.e.
     $$
     \int |g(\xi_1, \xi_{2}, \ldots, \xi_{k})|^\gamma d\nu^{k}<\infty \text{\ for some }\ \gamma>0.
     $$
     Second, it is an assumption that the potential in our setting has only finite range dependencies. Notice that some results in the case of alloy type Anderson model do allow infinite range dependencies, which in our case would mean that $g$ is a function which ``dependence'' on $\xi_i$ tends to zero sufficiently fast as $|i|\to \infty$ in some sense. In fact, in the case $\# \, \mathrm{supp} \, \nu < \infty$, the work \cite{ADZ1, ADZ2} can deal with H\"older continuous sampling functions $f : ( \mathrm{supp} \, \nu )^\Z \to \R$.

\end{enumerate}
\end{remark}

\section{Parametric Non-Stationary Furstenberg Theorem}\label{ss.introparamfurst}

A central component of the proof of Theorem~\ref{t.main} is aimed at understanding the properties of the products of transfer matrices for the operators in question, and the way those products behave for different values of the energy. We will rely on the recent work \cite{GK3} by two of the present authors. Let us recall a key theorem from \cite{GK3} which we will invoke in the next section when proving Theorem~\ref{t.main}.

Let us consider random products of independent but not identically distributed matrices from $\SL(2, \mathbb{R})$ that depend on a parameter. That is, we are working with maps $A(\cdot)$ from some compact interval $J=[b_{-}, b_{+}]\subset \R$ to $\SL(2,\R)$. We assume that all these maps are $C^1$. A random matrix depending on a parameter is therefore given by a measure on the space $\mA:=C^1(J,\SL(2,\R))$.  
For any such measure $\mgr$ on $\mA$ and any individual parameter value $a\in J$, we can consider the distribution of $A(a)$, which is a measure on~$\SL(2,\R$); we denote this measure by~$\mgr^a$.

For any $A\in \SL(2, \R)$ we denote by $f_A:\mathbb{RP}^1\to \mathbb{RP}^1$ the corresponding projective map. The measure $\mgr^a$ therefore defines a distribution on the space of projective maps, which, slightly abusing notation, we will denote by the same symbol.

A (non-stationary) product of random matrices, depending on a parameter, is given by a sequence of measures $\mgr_n$ on $\mA$. We assume that all these measures belong to some compact set $\mK$ of measures on $C^1(J,\SL(2,\R))$, i.e. $\mgr_n\in \mK$ for all~$n\in \mathbb{Z}$.

We impose the following assumptions:
\begin{Bitemize}
\bitem\label{B:Furstenberg} \textbf{Measures condition}: for any measure $\mgr\in\mK$ and any $a\in J$, there are no Borel probability measures $\msp_1$, $\msp_2$ on $\mathbb{RP}^{1}$ such that $(f_A)_*\msp_1=\msp_2$ for $\mgr^a$-almost every matrix $A\in \SL(2, \mathbb{R})$.
\bitem\label{B:C1} \textbf{$C^1$-boundedness}: there exists a constant $\Cn$ such that any map $A(\cdot) \in C^1(J,\SL(2,\R))$
from the support of any $\mgr\in\mK$ has $C^1$-norm at most $\Cn$.
\bitem\label{B:Monotonicity} \textbf{Monotonicity}: there exists $\delta>0$ such that for any $\mgr\in\mK$, any map $A(\cdot)$ from the support of $\mgr$, and any $a_0\in J$, one has
$$
\forall v\in \R^2\setminus \{0\} \quad \left.\frac{d\arg (A(a)(v))}{da}\right|_{a_0}  >\delta.
$$
\end{Bitemize}

Consider the product space $\OZ:=\mA^{\Z}$ and the product measure $\Prob:=\prod_{n\in \Z} \mgr_n$. For $n \in \N$, $a \in J$, and $\omega=(\dots, A_{-1},A_0, A_1,\dots)\in\OZ$, we denote
$$
\mathbf{T}_{n,\omega, a}:=A_n(a)\dots A_1(a),
$$
$$
\mathbf{T}_{-n, \omega, a}:= (A_{-n+1}(a))^{-1} \dots (A_{-1}(a))^{-1} (A_{0}(a))^{-1},
$$
and
$$
L_{n}(a):=\E \log \|\mathbf{T}_{n,\omega, a} \|,\ L_{-n}(a):=\E \log \|\mathbf{T}_{-n,\omega, a} \|.
$$
Then we have the following statement, which combines results from \cite{GK1} and \cite{GK3}:

\begin{theorem}\cite[Theorem 1.1]{GK1}\cite[Theorem 1.14]{GK3}\label{t.vector}
Under the assumptions~\ref{B:Furstenberg}--\ref{B:Monotonicity} we have:
\begin{itemize}

\item The sequence $L_n(a) =\E \log \|\mathbf{T}_{n,  \omega, a}\|$ grows at least linearly, that is, there exists
$h > 0$ such that for any $n\in \mathbb{N}$, $a\in J$, and any $\mu_1, \mu_2, \ldots, \mu_n\in \mK$, we have
$L_n(a)\ge hn$.

\item For almost all $\omega\in\OZ$, the following holds for all $a\in J$: If
$$
\limsup_{n\to+\infty} \frac{1}{n} \left( \log |\mathbf{T}_{n,  \omega, a} \left( \begin{smallmatrix} 1 \\ 0 \end{smallmatrix} \right)| - L_n(a) \right)<0,
$$
then in fact $|\mathbf{T}_{n,a, \omega} \left(  \begin{smallmatrix} 1 \\ 0 \end{smallmatrix} \right)|$ tends to zero exponentially as $n\to \infty$. Namely, 
$$
 \log |\mathbf{T}_{n, \omega, a} \left( \begin{smallmatrix} 1 \\ 0 \end{smallmatrix} \right)| = -L_n(a) + o(n).
$$

\item[$\bullet$] For almost all $\omega\in\OZ$, the following holds for all $a\in J$: If for some $\bar{u}\in \R^2\setminus \{0\}$, we have
$$
\limsup_{n\to+\infty} \frac{1}{n} \left( \log |\mathbf{T}_{n, \omega,a} \bar u| - L_n(a) \right)<0
$$
and
$$
\limsup_{n\to+\infty} \frac{1}{n} \left( \log |\mathbf{T}_{-n, \omega, a} \bar u| - L_{-n}(a) \right)<0,
$$
then both sequences $|\mathbf{T}_{n, \omega, a} \bar u|, |\mathbf{T}_{-n, \omega, a} \bar u|$ in fact tend to zero exponentially. More specifically,
$$
\log |\mathbf{T}_{n, \omega, a} \bar u| = - L_n(a) + o(n)
$$
and
$$
\log |\mathbf{T}_{-n, \omega, a} \bar u| = - L_{-n}(a) + o(n).
$$
\end{itemize}
\end{theorem}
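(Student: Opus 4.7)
The plan is to handle the three bullets in sequence—linear lower bound, then the fixed-parameter dichotomy, then the parametric upgrade—treating the first two as applications of non-stationary Furstenberg-type arguments in the spirit of \cite{GK1} and reserving the novelty for the parameter-uniform version, which rests on the monotonicity assumption~\ref{B:Monotonicity}.

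For the first bullet I would fix $a\in J$ and $\mgr_1,\dots,\mgr_n\in\mK$ and observe that condition~\ref{B:Furstenberg} rules out the only obstruction to the one-step Furstenberg drift being strictly positive. Lower semicontinuity of the drift in $\mgr$ (at fixed $a$) and continuity in $a$ (supplied by~\ref{B:C1}) combine with compactness of $\mK\times J$ to produce a uniform positive lower bound $h>0$; summing one-step contributions along the sequence $\mgr_1, \ldots, \mgr_n$ then yields $L_n(a)\geq hn$ for every admissible choice.

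Next I would carry out the non-stationary Oseledets analysis at fixed $a$. This should produce, $\P$-almost surely, a unique exceptional projective direction $e^s_\omega(a)\in\mathbb{RP}^1$ along which $\mathbf{T}_{n,\omega,a}$ contracts at rate $-L_n(a)+o(n)$, while every other direction expands at rate $L_n(a)+o(n)$. The second bullet is then a direct dichotomy: strict slow growth of $\mathbf{T}_{n,\omega,a}(1,0)^{T}$ forces $(1,0)^T \in e^s_\omega(a)$, which pins down the exact decay. For the third bullet I would apply the same dichotomy independently to the forward and backward halves of $\omega$, so that simultaneous slow growth in both time directions forces $\bar u$ to lie in both the forward and backward stable directions, yielding the symmetric decay at rates $-L_{\pm n}(a)+o(n)$.

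The hard part is the quantifier swap from ``for each $a$, $\P$-a.e.\ $\omega$'' to ``for $\P$-a.e.\ $\omega$, for every $a\in J$''. A priori each $a$ excludes its own null set, and there are uncountably many parameters, so one cannot just take a union. Assumption~\ref{B:Monotonicity} is the right input: it forces the projective action of $A(a)$ to move monotonically in $a$ with derivative at least $\delta$, so the stable direction $e^s_\omega(a)$ is a monotone function of $a$ with modulus of continuity controlled by~\ref{B:C1}. I would then discretize $J$ at successive dyadic scales, apply the per-$a$ statement along the resulting net, and close with a Borel--Cantelli argument that uses the uniform large-deviation estimates associated with the first bullet—exactly the parametric framework developed in \cite{GK3}. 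This step is the technically heaviest, and is the reason the plain statement of \cite{GK1} alone does not suffice.
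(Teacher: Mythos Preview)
The paper does not contain a proof of this theorem: it is quoted verbatim as a combination of \cite[Theorem~1.1]{GK1} and \cite[Theorem~1.14]{GK3} and is used as a black box in the localization arguments that follow. So there is no in-paper proof to compare your proposal against.

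That said, your outline is a reasonable high-level sketch of the strategy in those references: uniform positivity of the drift from the measures condition plus compactness of $\mK\times J$, a non-stationary Oseledets dichotomy at fixed parameter, and then the quantifier swap ``for each $a$, a.e.\ $\omega$'' $\to$ ``a.e.\ $\omega$, for every $a$'' driven by the monotonicity assumption~\ref{B:Monotonicity} together with large-deviation estimates and a Borel--Cantelli argument over a dyadic net. One caution: the first bullet is not quite a sum of independent one-step drifts in the na\"{\i}ve sense---the non-stationary Furstenberg growth requires the finite-scale expansion and contraction estimates of \cite{GK2,GK1}, not merely positivity of a single-step average---so ``summing one-step contributions'' understates what is actually done there. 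But since the present paper treats the whole statement as an imported result, this is a matter for \cite{GK1,GK3} rather than for this manuscript.
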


\begin{remark}
The Monotonicity Condition (B3) can certainly be replaced by an assumption that $\arg (A(a)(v))$ decreases (instead of assumption that it increases). Therefore, for the random products of inverses of the matrices $\{A_{i}(a)\}$ the expectation  $L_{-n}(a) =\E \log \|\mathbf{T}_{-n,  \omega, a}\|$ grows at least linearly as well.
\end{remark}

In what follows, the role of the parameter $a\in J$ will be played by the energy $E$. Let us recall that for a given potential $\{v_n\}$ one defines transfer matrices by
$$
\Pi_{n,E}=\begin{pmatrix}
            E-v_n & -1 \\
            1 & 0 \\
          \end{pmatrix},
$$
and, similarly to the notation above, we denote their products by
$$
T_{N, E}=\left\{
           \begin{array}{lll}
             \Pi_{N,E}\ldots \Pi_{2,E}\Pi_{1,E},  & \hbox{if $N\ge 1$;} \\
             \text{\rm Id} & \hbox{if $N=0$;}\\
             \Pi_{-N,E}^{-1}\ldots \Pi_{-1,E}^{-1}\Pi_{0,E}^{-1}, & \hbox{if $N<0$,}
           \end{array}
         \right.
$$
and
$ T_{[N_1, N_2], E}=T_{N_2, E}T_{N_1-1, E}^{-1}$.

\section{The Reduction to the Non-Stationary Case}

Let $\{\xi_n\}_{n\in \Z}$ be an i.i.d.\ sequence of random variables, and let the potential $v_j$ depend on $\xi_{j},\dots,\xi_{j+k-1}$:
$$v_j=g(\xi_{j},\dots,\xi_{j+k-1}).$$
Fix the values of $\xi_{-k-1},\dots,\xi_{-1},\xi_1,\dots,\xi_{2k+1}$. Then we get a random vector of potential values $v_{-k+1},\dots,v_{0}$, depending on $\xi_0$ only, which is independent of the (random) values of the potential with indices outside of this range. So altering ``fixed'' and ``random'' strings, we get a non-stationary product of \emph{independent} (transfer) matrices. This puts us in the setting of Theorem \ref{t.vector}, which deals with products of independent (but not necessarily identically distributed) random matrices. In order to be able to apply it, we need to verify that all the random matrices are defined by distributions from some compact set of measures such that each distribution from that set satisfies the conditions (B1)--(B3). In order to do that, it is actually convenient to form even larger blocks of transfer matrices.

The following is the key statement:
\begin{prop}\label{p:main}
Assume that the law of the random vector $\vec{v}=(v_{1},\dots,v_{d})$  has at least five points in its support, {for which the following property holds: for some intermediate index $i_0$, any two of these vectors differ in at least one position $i<i_0$ and in at least one position $i>i_0+1$.}
Then there exists a finite set $X\subset \R$ of energies, such that for any $E\notin X$, the measure condition is satisfied for the corresponding products of transfer matrices for that value of~$E$, i.e. there are no two probability measures $\nu_1, \nu_2$ on $\mathbb{RP}^1$ such that projectivizations of all the corresponding products of transfer matrices would send $\nu_1$ to $\nu_2$.
\end{prop}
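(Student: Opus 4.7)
My plan is to show that failure of the measures condition at an energy $E$ forces proper algebraic conditions on $E$, so that the set of bad energies is either finite or all of $\R$, and then to rule out the latter.

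First, I would reformulate the failure. Suppose the measures condition fails at $E$: then there are Borel probability measures $\mu_1, \mu_2$ on $\mathbb{RP}^1$ with $(f_{T_{\vec v^{(i)}}(E)})_* \mu_1 = \mu_2$ for each of the (at least five) distinguished support points $\vec v^{(1)}, \dots, \vec v^{(5)}$. Writing $T_i(E) := T_{\vec v^{(i)}}(E)$, one obtains $(f_{M_{1j}(E)})_* \mu_1 = \mu_1$ with $M_{1j}(E) := T_1(E)^{-1} T_j(E)$, so $\mu_1$ is invariant under the subgroup $\Gamma_E \le \SL(2,\R)$ generated by the four matrices $M_{12}(E), \dots, M_{15}(E)$.

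Next I would invoke the classical dichotomy: any subgroup of $\SL(2,\R)$ that preserves a Borel probability measure on $\mathbb{RP}^1$ is \emph{elementary}, meaning one of the following holds: (a) it shares a common fixed point in $\mathbb{RP}^1$; (b) it preserves a common unordered pair $\{x,y\} \subset \mathbb{RP}^1$; or (c) it is contained in a conjugate of $\SO(2)$, hence abelian with all traces in $[-2,2]$. Each of these is an algebraic condition on the entries of the generators $M_{1j}(E)$: (a) and (b) say a common eigenvector (resp.\ eigenpair) exists, which translates to the vanishing of appropriate resultants across the four matrices; (c) says all pairwise commutators vanish. Since each factor $\Pi_{j,E}$ is linear in $E$ and inversion in $\SL(2)$ preserves polynomial dependence, the entries of the $M_{1j}(E)$ are polynomial in $E$. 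Hence each of the three conditions is a polynomial system in $E$ alone, and the bad set is therefore a finite union of real algebraic subsets of $\R$, each of which is either finite or all of $\R$.

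The remaining step is to rule out the all-of-$\R$ alternative by exhibiting a single good energy $E_0$, which I would do by taking $E_0$ large and analyzing the leading-order asymptotics of $T_{\vec v}(E_0)$, and hence of the $M_{1j}(E_0)$, as $E_0 \to \infty$. The hypothesis that any two of the distinguished vectors differ in at least one position $i < i_0$ and in at least one position $i > i_0 + 1$ is precisely what ensures that the four leading-order ratio matrices $M_{1j}(E_0)$ are sufficiently generic in $\SL(2,\R)$ to simultaneously obstruct (a), (b), and (c): no common eigenvector, no common eigenpair, and not all pairwise commuting. The main obstacle is this final verification; it is a direct asymptotic computation whose success relies on having enough independent variation on both sides of the cutoff $i_0$ to break each type of elementary structure separately.
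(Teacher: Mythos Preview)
Your overall strategy---show the bad set is algebraic, then exhibit good energies at large $|E|$---is exactly the one the paper uses (and announces in its introduction), and your dichotomy for measure-preserving subgroups is precisely the paper's Lemma~\ref{l.measureconditionfailure}. The execution, however, differs in two places worth noting.

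First, the paper unifies your cases (a), (b), (c) into a single statement about a common point pair $(p,p')\in\CP^1\times\CP^1$: in the $\SO(2)$-conjugate case every $T_i$ sends $B_1^{-1}[1:i]$ to $B_2^{-1}[1:i]$, and a pigeonhole argument (five support points, $|F|=2$) extracts three vectors sharing a common $p\mapsto p'$. Everything then reduces to Lemma~\ref{l:three}. This makes algebraicity transparent and avoids your separate handling of (c); note that ``all pairwise commutators vanish'' is only necessary for (c), so you would still have to rule it out at large $E$, which is delicate because your $M_{1j}(E)\to I$ as $E\to\infty$. Second, for the large-$E$ step the paper does not expand $M_{1j}$ asymptotically. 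Instead Lemma~\ref{l:eps} gives a direct projective-dynamics argument: once an orbit misses a small disc around $E-v_{j_1}$, subsequent iterates are trapped near $0$ and the final image lands in a disc determined by the last coordinate. The two-sided differing hypothesis is then used once on each side of $i_0$---the early difference forces the trapping for at least two of the three vectors, the late difference forces their images into disjoint discs---contradicting a common $p'$. This sidesteps the cancellation you would face in $T_1^{-1}T_j$ and explains the exact shape of the hypothesis.
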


\begin{remark}
In fact, in Proposition \ref{p:main} one can even provide an explicit upper bound for the cardinality of the set $X$ (which will depend only on the size $d$ of the random vector $\vec{v}$, and will grow at most linearly with $d$).
\end{remark}

We will give the proof of Proposition \ref{p:main} in Section \ref{s.proofofprop}.

\medskip

{
Note that Proposition \ref{p:main} is applicable under the assumptions of Theorem~\ref{t.main}.
\begin{lemma}\label{l:p-applicable}
Assume that the assumptions of Theorem~\ref{t.main} are satisfied. Then, conditionally to any values of $\xi_{k+1}, \dots, \xi_{2k}$, $\xi_{9k+1}, \dots, \xi_{10k}$, the law of
\[
\vec{v}=(v_{k+1},\dots, v_{9k})\in \R^{8k}
\]
satisfies the assumptions of Proposition~\ref{p:main}.
\end{lemma}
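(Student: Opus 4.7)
The plan is to construct, for any fixed choice of the frozen values, five positive-measure events in the conditional law of $(\xi_{2k+1},\ldots,\xi_{9k})$ whose images under the deterministic map to $\vec{v}$ land in five mutually disjoint closed regions of $\R^{8k}$, arranged so that any two of the resulting vectors differ at coordinates on both sides of the intermediate index $i_0 = 4k$. Since each such region will carry positive conditional mass, it meets the conditional support of $\vec{v}$, and picking one representative per region yields the five points required by Proposition~\ref{p:main}. (We take $k \ge 2$; the case $k=1$ is the classical i.i.d.\ Anderson model, already covered by the results cited in the introduction.) The first step is to extract from essential non-constancy of $g$ two disjoint Borel sets $A_-, A_+ \subset \R^k$ of positive $\nu^k$-measure and reals $c_- < c_+$ with $g|_{A_-} \le c_-$ and $g|_{A_+} \ge c_+$; one may simply pick $c_- < c_+$ strictly inside $[\mathrm{essinf}\,g,\mathrm{esssup}\,g]$ and set $A_- = \{g \le c_-\}$, $A_+ = \{g \ge c_+\}$.

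The second step is to select six \emph{target indices}
\[
n_1, n_2, n_3 = 2k+1,\, 3k+1,\, 4k+1 \quad\text{and}\quad n_4, n_5, n_6 = 5k+2,\, 6k+2,\, 7k+2,
\]
all lying in $[2k+1, 8k+1]$ with consecutive gaps at least $k$. The $k$-tuples $(\xi_{n_i}, \ldots, \xi_{n_i+k-1})$ are then pairwise disjoint and sit entirely inside the random range $[2k+1, 9k]$, so they are independent under $\nu^{7k}$; the position of $v_{n_i}$ in $\vec{v}$ is $n_i - k$, satisfying $n_i - k < 4k = i_0$ for $i \le 3$ and $n_i - k > 4k+1 = i_0 + 1$ for $i \ge 4$. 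For each $b \in \{0,1\}^6$, set $E_b = \bigcap_{i=1}^{6} \{(\xi_{n_i}, \ldots, \xi_{n_i+k-1}) \in A_{b(i)}\}$ with the convention $A_0 = A_-$, $A_1 = A_+$. By independence and positivity, $\nu^{7k}(E_b) = \prod_{i=1}^{6} \nu^k(A_{b(i)}) > 0$, and this bound is uniform in the frozen values. On $E_b$, the coordinate $v_{n_i}$ of $\vec{v}$ lies in $(-\infty, c_-]$ or $[c_+, \infty)$ according to $b(i)$, so $\vec{v}$ is forced into a closed region $R_b \subset \R^{8k}$; since $c_- < c_+$, the regions $R_b$ are pairwise disjoint as $b$ varies.

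Finally, choose five patterns $b^{(1)}, \ldots, b^{(5)} \in \{0,1\}^6$ whose first three bits are pairwise distinct and whose last three bits are also pairwise distinct --- easily arranged since $|\{0,1\}^3| = 8 \ge 5$; for instance, take $b^{(j)} = (w_j, w_j)$ for five distinct $w_j \in \{0,1\}^3$. Each $R_{b^{(j)}}$ has positive conditional probability under the law of $\vec{v}$, hence meets its support; pick $\vec{v}^{(j)} \in R_{b^{(j)}} \cap \mathrm{supp}(\mathrm{law}(\vec{v}))$. The five vectors are pairwise distinct (disjoint regions), and for any $j \ne j'$ the patterns $b^{(j)}, b^{(j')}$ differ in at least one bit in $\{1, 2, 3\}$ and at least one in $\{4, 5, 6\}$, forcing $\vec{v}^{(j)}$ and $\vec{v}^{(j')}$ to disagree at a $\vec{v}$-coordinate strictly less than $i_0$ and at a coordinate strictly greater than $i_0 + 1$. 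This verifies the hypotheses of Proposition~\ref{p:main}. The only truly measure-theoretic ingredient is the extraction of $A_\pm$; the remainder is a combinatorial assembly resting on independence of disjoint $\xi$-tuples, and the main design constraint --- that six $k$-tuples fit in $[2k+1, 9k]$ with gaps $\ge k$ and three on each side of $i_0$ --- is precisely what forces $k \ge 2$.
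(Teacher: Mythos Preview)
Your argument is correct and follows essentially the same route as the paper's: isolate six disjoint $k$-blocks of free $\xi$'s inside $[2k+1,9k]$, observe that the six corresponding potential values are i.i.d.\ with law $g_*\nu^k$ (the paper picks two points $u_1,u_2$ in its support where you use the level sets $A_\pm$), place three on each side of a separating index, and build the five support points from five patterns $(w,w)$ with pairwise distinct halves $w\in\{0,1\}^3$. The only substantive difference is your explicit carve-out of $k=1$: the paper uses indices $2k{+}1,3k{+}1,4k{+}1,6k{+}1,7k{+}1,8k{+}1$ with $i_0=5k$ and does not single out $k=1$, but in fact both the paper's choice and yours run into the same strict-inequality boundary issue there; it disappears if one allows a half-integer separating index (e.g.\ $i_0=4k+\tfrac12$ with your indices), so the detour through the classical Anderson result is unnecessary.
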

}
\begin{proof}
Note that $v_{2k+1}$, $v_{3k+1}$, $v_{4k+1}$, $v_{6k+1}$, $v_{7k+1}$, $v_{8k+1}$ are i.i.d.\ (see Fig.\ref{f:block} for the illustration of dependencies), and their distribution is given by the measure $m:=g_* \nu^k$, which is not concentrated on a single point by the assumption of Theorem~\ref{t.main}. Hence, the distribution $m$ of these coordinates has at least two different points $u_1,u_2$ in its support. Thus, the support of $m^3$ contains at least 8 points (belonging to $\{u_1,u_2\}^3$), and hence the support of $m^6$ contains at least~8~points of the form
\[
\{(u,u) \mid u\in \supp m^3 \}.
\]
Every two of these 8 points differ in at least one of the first three coordinates, and in at least one of the last three. Now, for each of these 8 points, there is a point in the support of the law of $\vec{v}$ that projects to it when forgetting all the other coordinates. Pick these 8 points, select any 5 among them, and take $i_0:=5k$; we see that the assumptions of Proposition~\ref{p:main} are satisfied.
\end{proof}


We are now going to ensure the applicability of the Nonstationary Furstenberg Theorem via the scheme of freezing some of the $\xi_i$'s, as discussed in the introduction. To do this, decompose a group of $11k$ consecutive $\xi_i$'s, from $\xi_1$ to $\xi_{11k}$, into
\begin{equation}\label{eq:Xi-1}
\Xi_1:= (\Xi_1^-, \Xi_1^+), \quad  \Xi_1^-= (\xi_1,\dots,\xi_k), \quad \Xi_1^+=(\xi_{10k+1},\dots,\xi_{11k}),
\end{equation}
\begin{equation}\label{eq:Xi-2}
\Xi_2:= (\Xi_2^-, \Xi_2^+), \quad  \Xi_2^-=(\xi_{k+1},\dots,\xi_{2k}), \quad \Xi_2^+=(\xi_{9k+1},\dots,\xi_{10k}),
\end{equation}
and
\[
\Xi_3:=(\xi_{2k+1},\dots,\xi_{9k}).
\]
We are going to use this decomposition to study the properties of the random transfer matrix $T_{10k,E,\omega}$. Take any closed interval $J\subset \R$. Every set of ``frozen'' values $\Xi_1$ defines a random (due to the dependence on $\Xi_2$ and $\Xi_3$) parameter-dependent matrix, that is, a map
\begin{equation}\label{eq:A-map}
A: J \to \SL(2,\R), \quad A(E)= T_{10k,\omega, E}.
\end{equation}

This {transfer} matrix can be decomposed as a product:
\begin{equation}\label{eq:9-10}
T_{10k,\omega, E}=T_{[9k+1,10k],\omega, E}T_{[k+1,9k],\omega, E} T_{[1,k],\omega, E};
\end{equation}
let us consider the interior part of this product, the random transfer matrix $T_{[k+1,9k],\omega, E}$.

We already know due to Lemma~\ref{l:p-applicable} that under the assumptions of Theorem~\ref{t.main}, for every set of values $\Xi_2$ (``frozen'' for the scenario of this lemma), we get a finite set $X=X_{\Xi_2}$ of exceptional energies, defined by Proposition~\ref{p:main} applied to the
conditional law of $(v_{k+1},\dots,v_{9k})$. Hence, the law of the random transfer matrix $T_{[k+1,9k],E}$, conditional to $\Xi_2$, satisfies the measures condition for $E\notin X_{\Xi_2}$.
 Define now the set $\mE$ to be the ``essential intersection'' of these sets:
\begin{equation}\label{eq:def-mE}
\mE:=\{E \mid E \in X_{\Xi_2} \,\, \text{for $\nu^{2k}$-a.e.} \,\, \Xi_2\}.
\end{equation}
As each individual set $X_{\Xi_2}$ is finite (actually, one can obtain a bound on their cardinality), the same applies to the set~$\mE$. We are now
ready to claim the applicability of the Nonstationary Furstenberg Theorem to the full maps $T_{10k,E,\omega}$. Namely, we have the following lemma.

\begin{figure}
\includegraphics[width=1\textwidth]{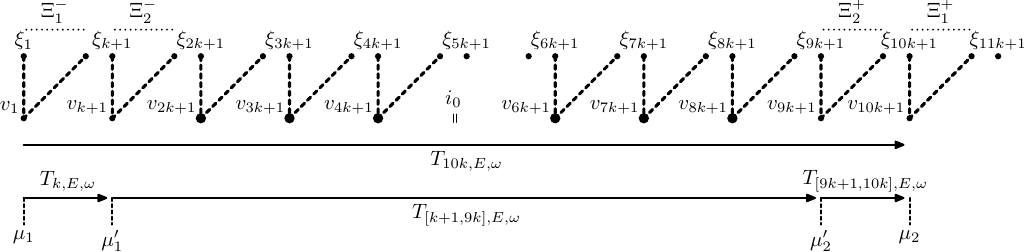}
\caption{Block factors, transfer matrices, and measures $\nu_i, \nu'_i$.}\label{f:block}
\end{figure}


\begin{lemma}\label{l:applicable}
Assume that the assumptions of Theorem~\ref{t.main} hold, and the closed interval $J$ is disjoint from the set $\mE$ defined by~\eqref{eq:def-mE}. Then there exists a compact set $\mK$ of probability measures on set of maps from $J$ to $\SL(2,\R)$, such that for $\nu^{2k}$-a.e. $\Xi_1$ the distribution of the map~$A(\cdot)$, defined by~\eqref{eq:A-map}, belongs to $\mK$, and the conditions~\ref{B:Furstenberg}--\ref{B:Monotonicity} are satisfied for this set.
\end{lemma}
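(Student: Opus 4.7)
The plan is to verify conditions~\ref{B:Furstenberg}--\ref{B:Monotonicity} of Theorem~\ref{t.vector} for a suitable compact family $\mK$. Let $B$ be an essential bound on $|g|$, so every realized $v_n$ lies in $[-B,B]$. The map $\Psi: [-B,B]^{10k} \to C^1(J, \SL(2,\R))$ sending $(v_1,\dots,v_{10k})$ to $E\mapsto \Pi_{10k,E}\cdots\Pi_{1,E}$ is continuous with compact image $K$, and every conditional law $\lambda_{\Xi_1}$ of $E\mapsto T_{10k,\omega,E}$ is supported on $K$. I would take $\mK$ to be the weak closure of $\{\lambda_{\Xi_1} : \Xi_1 \in (\supp\nu)^{2k}\}$ inside the space of probability measures on $K$, which is weakly compact by Prokhorov. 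Conditions~\ref{B:C1} and~\ref{B:Monotonicity} then follow quickly: \ref{B:C1} from the uniform $C^1$-bound on $K$ (polynomial products of bounded matrices over a compact energy interval), and~\ref{B:Monotonicity} from the strict monotonicity of the Pr\"ufer angle for a single Schr\"odinger transfer matrix $\Pi_{n,E}$, inherited by the product of $10k$ such factors and uniformized using compactness of $[-B,B]^{10k}\times J\times \mathbb{RP}^1$.

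The heart of the argument is condition~\ref{B:Furstenberg}. Fix $E \in J \subset \R \setminus \mE$ and $\mu = \lambda_{\Xi_1}$, and assume for contradiction the existence of Borel probability measures $\mu_1, \mu_2$ on $\mathbb{RP}^1$ with $(f_A)_*\mu_1 = \mu_2$ for $\lambda_{\Xi_1}^E$-almost every $A$. Write $T^- := T_{[1,k],\omega,E}$, $T^+ := T_{[9k+1,10k],\omega,E}$, and $T^0 := T_{[k+1,9k],\omega,E}$, so $A = T^+ T^0 T^-$ by~\eqref{eq:9-10}. Further conditioning on $\Xi_2$: because $T^\pm$ are measurable functions of $(\Xi_1,\Xi_2)$ only, they become deterministic, while $T^0$ has, conditional on $\Xi_2$, exactly the distribution analyzed in Lemma~\ref{l:p-applicable}. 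Setting $\mu_1' := (f_{T^-})_*\mu_1$ and $\mu_2' := (f_{T^+}^{-1})_*\mu_2$ (both measurable functions of $(\Xi_1,\Xi_2)$), the failure of~\ref{B:Furstenberg} yields $(f_{T^0})_*\mu_1' = \mu_2'$ almost surely over the randomness of $\Xi_3$. By Proposition~\ref{p:main} this forces $E \in X_{\Xi_2}$; since the conditional law of $\Xi_2$ given $\Xi_1$ is $\nu^{2k}$ by independence, we conclude $E \in X_{\Xi_2}$ for $\nu^{2k}$-almost every $\Xi_2$, i.e.\ $E \in \mE$, contradicting $E \in J$.

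The principal obstacle is compactness of $\mK$: since $g$ is merely Borel measurable, the map $\Xi_1 \mapsto \lambda_{\Xi_1}$ need not be continuous, so its image may fail to be weakly closed, and the closure $\mK$ may contain genuine weak limit measures to which the argument above does not literally apply. Handling them requires showing that~\ref{B:Furstenberg} persists under such limits. A natural route is a Lusin-type reduction: restrict to a compact subset of $(\supp\nu)^{2k}$ of $\nu^{2k}$-measure arbitrarily close to $1$ on which $g$ becomes continuous (hence $\Xi_1 \mapsto \lambda_{\Xi_1}$ is weakly continuous), and verify that any weak limit point inherits a compatible factorization to which the Proposition~\ref{p:main}/Fubini argument can still be applied.
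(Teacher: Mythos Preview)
Your verification of \ref{B:C1} and \ref{B:Monotonicity}, and your derivation of \ref{B:Furstenberg} for the specific measures $\lambda_{\Xi_1}$ via the decomposition~\eqref{eq:9-10} and Proposition~\ref{p:main}, are correct and essentially identical to the paper's argument.

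The gap you flag, however, is genuine and your proposed fix does not close it. Condition~\ref{B:Furstenberg} is the \emph{non}-existence of a pair $(\mu_1,\mu_2)$, and non-existence is not a weak-$*$-closed condition; a weak limit of $\lambda_{\Xi_1}$'s can lose the conditional-independence structure that made the Proposition~\ref{p:main} argument go through. Your Lusin route does not recover this: restricting to a compact $K_\eps\subset(\supp\nu)^{2k}$ on which $\Xi_1\mapsto\lambda_{\Xi_1}$ is continuous gives you a compact image, but only covers $\Xi_1$'s of mass $1-\eps$. In the application (proof of Theorem~\ref{t.main}) one fixes an entire sequence $\bar a=(\bar a_j)_{j\in\Z}$ of frozen $\Xi_1$-values and needs a \emph{single} compact $\mK$ containing all the $\lambda_{\bar a_j}$ simultaneously; a typical sequence $\bar a$ will not stay inside any such $K_\eps$, and taking a countable union $\bigcup_n K_{1/n}$ destroys compactness.

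The paper resolves this by abandoning the closure construction entirely and instead \emph{enlarging} $\mK$. It defines a set $\mK_1$ of probability measures on $I_V^{2k}\times\R^{2k}\times\R^{7k}$ by structural conditions: the $\Xi_2$-marginal is $\nu^{2k}$, and conditionally on $\Xi_2$ the outer-potential factor and the $\Xi_3$-factor are independent with the $\Xi_3$-law equal to $\nu^{7k}$. This set is much larger than $\{\lambda_{\Xi_1}\}$ (the latter correspond to Dirac conditional laws), but the structural conditions are exactly what the factorization argument for~\ref{B:Furstenberg} needs, so~\ref{B:Furstenberg} holds for \emph{every} element of the induced $\mK$, not just the $\lambda_{\Xi_1}$'s. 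Compactness of $\mK_1$ (Lemma~\ref{l:compact}) is obtained by identifying it, via conditional averages over a refining sequence of partitions of the $\Xi_2$-space, with a closed subset of the product $\prod_n P(I_V^{2k})$, and showing the pushforward to $P(I_V^{10k})$ is continuous; this is where the real work (a martingale/approximation argument) lies.
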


\begin{proof}
Recall that the function $g$ is bounded; let $I_V=[-C_V,C_V]$ be a compact interval so that
$\nu^k$-almost surely we have
\[
g(\xi_{i+1},\dots,\xi_{i+k}) \in I_V.
\]
We will actually construct a compact set $\mK_0$ of probability measures on the set of possible potentials, that is, on the cube $I_V^{10k}$; then, the set $\mK$ is chosen as the image of this set under the push-forward by the continuous map
\begin{equation}\label{eq:subst}
\bar v=(v_1,\dots,v_{10k}) \mapsto A(\cdot), \quad A(E)=T_{10k,\bar v, E}.
\end{equation}


Note that once both $\Xi_1$ and $\Xi_2$ are fixed, the values of the potential $v_1,\dots,v_{k}, v_{9k+1},\dots, v_{10k}\in I_V^{2k}$ become deterministic, while $\Xi_3$ is independent from these, thus its distribution is equal to $\nu^{7k}$. We now define the set $\mK_1$ of probability measures $\tau$ on
\[
I_V^{2k} \times \R^{2k} \times \R^{7k}
\]
by the following conditions:
\begin{itemize}
\item the law of the second factor is $\nu^{2k}$;
\item conditionally to the second factor being equal to $\nu^{2k}$-a.e. point $\Xi_2$, the first and the third factors are independent, with the law of the third factor that is~$\nu^{7k}$. {We will denote the conditional law for the first factor by $\tau_{\Xi_2}$.}
\end{itemize}

Now, define the set $\mK_0$ as the image of $\mK_1$ under the push-forward by the map $\pi:I_V^{2k} \times \R^{2k} \times \R^{7k} \to I_V^{10k}$,
defined by
\begin{multline}
\pi: (v_1,\dots,v_k,v_{9k+1},\dots,v_{10k}; \Xi_2 ; \Xi_3) \mapsto
\\ (v_1,\dots,v_k; g(\xi_{k+1},\dots,\xi_{2k}),\dots, g(\xi_{9k},\dots,\xi_{10k-1}) ; v_{9k+1},\dots,v_{10k}).
\end{multline}
%
The key statement, which justifies the choice of $\mK_1$, $\mK_0$ and $\mK$ above, is the compactness of the image.
\begin{lemma}\label{l:compact}
The image $\mK_0$ defined above is a compact set of probability measures on~$I_V^{10k}$.
\end{lemma}
Let us postpone for now the proof of Lemma~\ref{l:compact}; assuming that it is already established, we complete the proof of Lemma~\ref{l:applicable}.

Namely, fix any compact interval $J\subset \R$ that is disjoint from~$\mE$. The set $\mK$ of possible laws of random parameter-dependent transfer matrices, obtained from the set of distributions $\mK_0$ by push-forward by the map~\eqref{eq:subst}, is then also compact (as a continuous image of a compact set). Every measure from $\mK_0$ is supported on a bounded set $I_V^{10k}$, which implies the assumptions~\ref{B:C1} and~\ref{B:Monotonicity} due to the nature of Schr\"odinger cocycles. Also, notice that all the distributions of the form $(v_1,\dots,v_k, \xi_{k+1}, \xi_{k+2}, \ldots, \xi_{10k}, v_{9k+1},\dots,v_{10k})$ that we can obtain in our setting do belong to $\mK_1$. Indeed, for $\nu^{2k}$-a.e. value of $\Xi_2=(\xi_{k+1}, \ldots, \xi_{2k}, \xi_{9k+1}, \ldots, \xi_{10k})$  and any given $\Xi_1$, the conditional distribution of $(v_1,\dots,v_k,v_{9k+1}, \dots, v_{10k})$  is deterministic and, in particular, is independent of $(\xi_{2k+1},\ldots, \xi_{9k})$.



Finally (and most importantly), let us check the measure condition~\ref{B:Furstenberg}. Assume that for some energy $E$, the map $f_{T_{10k,E,\omega}}$ almost surely sends some measure~$\msp_1$ on~$\RP^1$ to~$\msp_2$. Using the decomposition~\eqref{eq:9-10}, we can rewrite this as
\begin{equation}\label{eq:center}
(f_{T_{[k+1,9k],\omega, E}})_* \msp'_1 = \msp'_2,
\end{equation}
where
\[
\msp'_1 = (f_{T_{k,\omega, E}})_* \msp_1, \quad \msp'_2 = (f_{T_{[9k+1,10k],\omega, E}^{-1}})_* \msp_2.
\]
For equality~\eqref{eq:center} to hold almost surely, it should hold almost surely conditionally to almost every~$\Xi_2$. Meanwhile, the pair of (random) measures $(\msp'_1,\msp'_2)$ depends only on $v_1,\dots,v_k,v_{9k+1},\dots,v_{10k}$, while conditionally to $\nu^{2k}$-a.e.~$\Xi_2$, the map $(f_{T_{[k+1,9k],\omega, E}})_*$ depends only on $\xi_{2k+1},\dots,\xi_{9k}$. Due to their conditional independence, if~\eqref{eq:center} holds almost surely conditionally to some $\Xi_2$, then, taking any pair $(\bar{\msp}'_1,\bar{\msp}'_2)$ in the support of the random (measure-valued) variable $(\msp'_1, \msp'_2)$, we see that for $\nu^{7k}$-a.e. $\Xi_3$,
\[
(f_{T_{[k+1,9k],\omega, E}})_* \bar{\msp}'_1= \bar{\msp}'_2.
\]
By definition this is possible only if $E\in X_{\Xi_2}$.

Hence, for~\eqref{eq:center} to hold almost surely, one should have $E\in X_{\Xi_2}$ for almost all~$\Xi_2$, which by definition implies $E\in \mE$. As the interval $J$ of parameters is disjoint from~$\mE$, the assumptions~\ref{B:Furstenberg}--\ref{B:Monotonicity} are satisfied. This completes the proof of Lemma~\ref{l:applicable} assuming Lemma~\ref{l:compact}.
\end{proof}

\tikzstyle{box} = [rectangle, minimum width=3cm, minimum height=1cm, text centered, text width=4cm, draw=black]
\tikzstyle{arrow} = [thick,->,>=stealth]

\begin{figure}
\begin{tikzpicture}
\node (Kxi) [box]{$\mK_1$: a set of measures on $I_V^{2k}\times \R^{2k}\times \R^{7k}$};
\node (Kv) [box, right of=Kxi, xshift=6cm]{$\mK_0\subset P(I_V^{10k})$: a set of measures, defining random potentials $v_1,\dots,v_{10k}$};
\node (Kprime) [box, below of=Kxi, xshift=0cm, yshift=-2cm]{$\mK_1'\subset \mP=\prod_n P(I_V^{2k})$: a set of sequences of averages over different $U_n$'s};
\node (K) [box, below of=Kv, yshift=-2cm]{$\mK\subset P(C^1(J,\SL(2,\R)))$: a set of measures, defining random parameter-dependent (transfer) matrices};

\draw [arrow] (Kxi) --node[anchor=south] {$\pi_*$} (Kv);
\draw [arrow] (Kv) --node[anchor=west] {$E\mapsto T_{10k,E}$} (K);
\draw [arrow] (Kxi) --node[anchor=east] {bijection $\Phi$} (Kprime);
\draw [arrow] (Kprime) --node[anchor=east, yshift=3mm] {continuous} (Kv);
\end{tikzpicture}
\caption{Passages between different sets of probability measures}\label{f:scheme}
\end{figure}
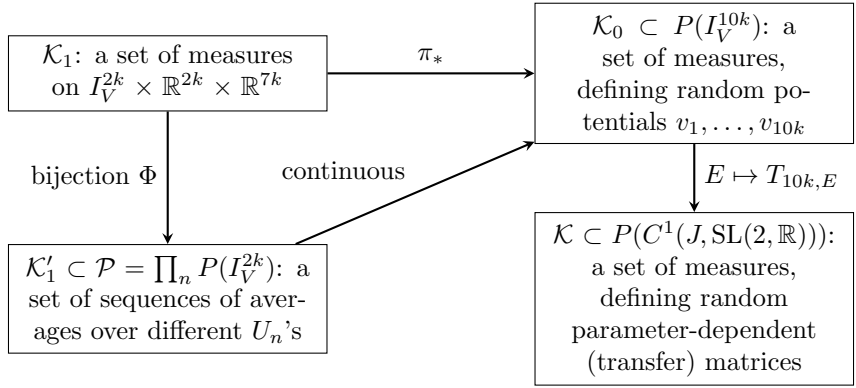

\begin{proof}[Proof of Lemma~\ref{l:compact}]
First, let us reformulate the condition that defines the set $\mK_1$. Namely, given a measure $\tau$ on $I_V^{2k} \times \R^{2k} \times \R^{7k}
$ and a subset $U\subset \R^{2k}$ of positive $\nu^{2k}$-measure, one can consider the conditional measure of $\tau$ on $I_V^{2k} \times U \times \R^{7k}$ and its projections $\tau^{1,3}_U$, $\tau_U$ on $I_V^{2k} \times \R^{7k}$ and on $I_V^{2k}$ respectively. Note that if $\tau\in \mK_1$, then due to its definition,
\begin{equation}\label{eq:mu-U}
\tau^{1,3}_U = \tau_U \times \nu^{7k}.
\end{equation}

{Now, fix a generating sequence of sets $U_n$ that come from a sequence of partitions of $\R^{2k}$, such that each partition is a refinement of the previous one (see Fig.~\ref{f:partitions}). Then, for any refinement
\[
U_n = \bigsqcup_j U_{n_j},
\]
one has
\begin{equation}\label{eq:mu-martingale}
\tau_{U_n} = \sum_j p_j \tau_{U_{n_j}}, \quad \text{where } \, p_j =\frac{\nu^{2k}(U_{n_j})}{\nu^{2k}(U_{n})}.
\end{equation}}
\begin{figure}
\includegraphics[width=0.4\textwidth]{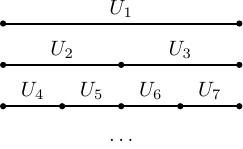}
\caption{A sequence of consecutively refined partitions of~$\R^{2k}$.}\label{f:partitions}
\end{figure}

{For any metric compact $Y$, denote by $P(Y)$ the set of probability measures on~$Y$,
equipped with the transport distance:
\[
\dist_{P(Y)} (\nu_1,\nu_2) = \sup_{\varphi \in \Lip(Y), \atop \|\varphi\|_{\Lip}\le 1} \left| \int_Y \varphi \, d\nu_1 - \int_Y \varphi \, d\nu_2 \right|.
\]
Now, consider the infinite product
\begin{equation}\label{eq:mP-def}
\mP:= \prod_n P(I_V^{2k}),
\end{equation}
equipped with the product topology; then it is a compact, as an (infinite) product of compacts.
Define $\mK_1'\subset \mP$ as the set of sequences of measures that satisfy the relations~\eqref{eq:mu-martingale} for any subpartition of any~$U_n$.
}

{
\begin{lemma}
The set $\mK_1'$ is a compact subset of $\mP$; the map
\[
\Phi:\mK_1\to\mK_1', \quad \Phi:\tau\mapsto (\tau_{U_n})_n,
\]
is a bijection between $\mK_1$ and $\mK_1'$.
\end{lemma}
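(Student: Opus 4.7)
The plan is to prove the two assertions separately: compactness of $\mathcal{K}_1'$ is a topological/closedness argument in the compact product space $\mathcal{P}$, while the bijection is established via measure disintegration and a martingale-type limit along the refining partitions.

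For compactness, first note that $\mathcal{P}=\prod_n P(I_V^{2k})$ is compact by Tychonoff, since each factor $P(I_V^{2k})$ is weak-$*$ compact (as $I_V^{2k}$ is compact metric). The set $\mathcal{K}_1'$ is cut out by the family of relations \eqref{eq:mu-martingale} indexed by (finite) refinements $U_n=\bigsqcup_j U_{n_j}$. Each such relation is a linear equation involving only finitely many coordinates with fixed coefficients $p_j=\nu^{2k}(U_{n_j})/\nu^{2k}(U_n)$, and linear equations of this form are continuous in the product topology, so their zero set is closed. Hence $\mathcal{K}_1'$ is a closed, and therefore compact, subset of $\mathcal{P}$.

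For injectivity of $\Phi$, I would use the defining structure of $\mathcal{K}_1$ to disintegrate: any $\tau \in \mathcal{K}_1$ admits a unique (up to $\nu^{2k}$-a.e.\ equivalence) measurable family $\Xi_2 \mapsto \tau_{\Xi_2} \in P(I_V^{2k})$ with
\[
\tau = \int \bigl(\tau_{\Xi_2}\times \delta_{\Xi_2}\times \nu^{7k}\bigr)\, d\nu^{2k}(\Xi_2).
\]
Then $\tau_{U_n}=\frac{1}{\nu^{2k}(U_n)}\int_{U_n}\tau_{\Xi_2}\, d\nu^{2k}(\Xi_2)$. Let $U_n(\Xi_2)$ be the cell of the $n$-th partition containing $\Xi_2$; since the partitions generate the Borel $\sigma$-algebra on $\mathbb{R}^{2k}$, testing against a countable dense family of Lipschitz functions on $I_V^{2k}$ and applying the scalar Lebesgue differentiation (martingale convergence) theorem to each, we get that $\tau_{U_n(\Xi_2)}\to \tau_{\Xi_2}$ weakly for $\nu^{2k}$-a.e.\ $\Xi_2$. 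Thus $(\tau_{U_n})_n$ determines $\Xi_2\mapsto \tau_{\Xi_2}$ up to null sets, hence determines $\tau$.

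For surjectivity, given $(\mu_n)\in \mathcal{K}_1'$, the relations \eqref{eq:mu-martingale} make the sequence $n\mapsto \mu_{U_n(\Xi_2)}$ a measure-valued martingale with respect to the filtration generated by the partitions. By the same test-function reduction, for $\nu^{2k}$-a.e.\ $\Xi_2$ there exists a weak limit $\tau_{\Xi_2}\in P(I_V^{2k})$, and $\Xi_2\mapsto \tau_{\Xi_2}$ is a measurable family. Define
\[
\tau:=\int \bigl(\tau_{\Xi_2}\times \delta_{\Xi_2}\times \nu^{7k}\bigr)\, d\nu^{2k}(\Xi_2).
\]
Then $\tau\in\mathcal{K}_1$ by construction, and the dominated convergence theorem, together with the defining martingale identity for $(\mu_n)$, gives $\tau_{U_n}=\mu_n$ for every $n$, i.e.\ $\Phi(\tau)=(\mu_n)$.

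The main obstacle is the surjectivity step: one must upgrade the scalar martingale limits for each test function into a genuine probability-measure-valued limit and check measurability of $\Xi_2\mapsto \tau_{\Xi_2}$. This is handled cleanly by working with a countable dense set of Lipschitz test functions (so only countably many null sets need to be removed) and by exploiting that $P(I_V^{2k})$ is a compact metric space in the transport distance, so that pointwise convergence of integrals against a dense family is equivalent to weak convergence of the measures.
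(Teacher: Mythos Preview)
Your proposal is correct and follows essentially the same route as the paper: compactness via closedness of the linear martingale relations in the Tychonoff-compact product, and bijectivity via martingale convergence along the refining partitions together with the disintegration formula~\eqref{eq:tau-inverse}. The only difference is one of presentation: you spell out the reduction of the measure-valued martingale convergence to scalar martingales via a countable dense family of Lipschitz test functions, whereas the paper invokes the Martingale Convergence Theorem directly for measure-valued martingales; both are valid and lead to the same conclusion.
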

}
\begin{proof}
{Each of the conditions~\eqref{eq:mu-martingale} defines a closed subset of $\mP$ (it is a linear relation for a finite number of measures). Hence, the set $\mK_1'$ they define is a countable intersection of closed subsets of $\mP$, and thus is a compact.}

{We have already seen that the conditions~\eqref{eq:mu-martingale} indeed hold for the $\Phi$-image of any measure $\tau\in\mK_1$; let us now show that every sequence $(m_n)\in \mK_1'$ admits a unique $\Phi$-preimage $\tau\in\mK_1$, thus showing that $\Phi$ is a bijection. This can be seen by a standard application of the Martingale Convergence Theorem.}

{Namely, take $\Xi_2\in \R^{2k}$ randomly w.r.t. $\nu^{2k}$, and consider the sequence of sets $U_{n_j}\ni \Xi_2$. Then, the sequence of measures $m_{n_j}$, due to the relations~\eqref{eq:mu-martingale}, is a martingale, taking values in the space of probability measures. Due to the Martingale Convergence Theorem, this sequence converges for $\nu^{2k}$-a.e. $\Xi_2\in \R^{2k}$ to some~$\tau_{\Xi_2}$, and the average of $\tau_{\Xi_2}$ over any~$U_n$ is equal to~$m_n$. Now, reconstruct the measure~$\tau$ by using $\tau_{\Xi_2}$ as conditional densities,
\begin{equation}\label{eq:tau-inverse}
\tau:= \int_{\R^{2k}} [\tau_{\Xi_2} \times \delta_{\Xi_2} \times \nu^{7k}] \, d\nu^{2k}(\Xi_2),
\end{equation}
where $\delta_{\Xi_2}$ is the Dirac measure concentrated at the point~$\Xi_2$. One then gets the relation $\tau_{U_n}=m_n$ directly from the averaging property, and~\eqref{eq:mu-U} holds by construction. Hence, $\Phi(\tau)=(m_n)$; finally, it is easy to see that this is the unique $\Phi$-preimage (again using martingale arguments), and thus $\Phi:\mK_1\to \mK_1'$ is a bijection.}
\end{proof}

Finally, note that the map $\pi_*\circ \Phi^{-1}: \mK_1'\to P(I_V^{10k})$ is continuous. To prove it, for $\nu^{2k}$-a.e. $\Xi_2$ consider the probability measure $\vartheta_{\Xi_2}$ on $I_V^{8k}$ that is the conditional (to $\Xi_2$) law of the potentials
\[
(v_{k+1},\dots,v_{9k}).
\]
A standard argument for measurable functions is that the map $\Xi_2\mapsto \vartheta_{\Xi_2}$ is a measurable map to a compact space $P(I_V^{8k})$, and hence for arbitrarily small $\eps>0$ it can be approximated by a piecewise-constant map $\vartheta^{(\eps)}$ in a way that:
\begin{itemize}
\item $\vartheta(\Xi_2)$ and $\vartheta^{(\eps)}(\Xi_2)$ are $\eps$-close to each other except for a set of points $\Xi_2$ of $\nu^{2k}$-measure less than~$\eps$;
\item there is a finite collection of sets $\{U_{n_j}\}_{j=1, \ldots, s}\subset \{U_n\}$ such that $\vartheta^{(\eps)}$ is piecewise-constant with respect to $\{U_{n_j}\}$: there exist measures $m_1,\dots,m_s\in P(I_V^{8k})$ such that
\[
\R^{2k} = \bigsqcup_{j=1}^s U_{n_j}, \quad \text{and}\ \  \forall j=1,\dots,s \ \   \text{we have}\ \
\vartheta^{(\eps)}|_{U_{n_j}} \equiv m_j.
\]
\end{itemize}
Indeed, it suffices to split the compact space $P(I_V^{8k})$ into a finite number $N$ of parts of diameter less then $\eps$; then, the measurable preimage of each of these can be $\frac{\eps}{N}$-approximated by an element of the algebra generated by~$U_n$.

{Let $\tau\in\mK_1$; we will now estimate the distance between its image $\pi_*(\tau)\in \mK_0$ and its approximation }
\begin{equation}\label{eq:approx}
{\pi^{(\eps)}_*(\tau):= \int_{\R^{2k}} [\tau_{\Xi_2} \times \vartheta^{(\eps)}(\Xi_2)] \, d\nu^{2k}(\Xi_2) =}
\sum_j \nu^{2k}(U_{n_j}) \cdot \left(\tau_{U_{n_j}}\times \vartheta^{(\eps)}|_{U_{n_j}}\right).
\end{equation}
{
Indeed, the image $\pi_*$ is equal to the integral
\begin{equation}\label{eq:image}
\pi_*\tau = \int_{\R^{2k}} [\tau_{\Xi_2} \times \vartheta(\Xi_2)] \, d\nu^{2k}(\Xi_2).
\end{equation}
Now, replacing $\vartheta(\Xi_2)$ in~\eqref{eq:image} by  $\vartheta^{(\eps)}(\Xi_2)$ at the points $\Xi_2$ where the distance between them is at most $\eps$ changes the result also at most by $\eps$, and at the points where this distance exceeds $\eps$ changes the result at most by
\[
\diam(I_V^{8k})\cdot \nu^{2k} (\{\Xi_2 \mid \dist(\vartheta(\Xi_2),\vartheta^{(\eps)}(\Xi_2)) \ge \eps) \le (20kC_V) \cdot \eps.
\]
}
{In particular, one has
\begin{equation}\label{eq:pi-dist-eps}
\dist(\pi_*(\tau),\pi^{(\eps)}_*(\tau)) \le (20kC_V+1) \eps.
\end{equation}
}
{On the other hand, for two measures $\tau,\tau'$ we have
\begin{equation}\label{eq:pi-eps-image}
\dist(\pi^{(\eps)}_*\tau,\pi^{(\eps)}_*\tau') \le \sum_{j=1}^s \nu^{2k}(U_{n_j}) \cdot \dist(\tau_{U_{n_j}},\tau'_{U_{n_j}}).
\end{equation}
Joining~\eqref{eq:pi-dist-eps} and~\eqref{eq:pi-eps-image}, we get
\begin{equation}\label{eq:pi-dist}
\dist(\pi_*(\tau),\pi_*(\tau'))\le 2\cdot (20kC_V+1) \eps + \sum_{j=1}^s \nu^{2k}(U_{n_j}) \cdot \dist(\tau_{U_{n_j}},\tau'_{U_{n_j}}).
\end{equation}
}

{The estimate \eqref{eq:pi-dist} implies the continuity of the map $\pi_*\circ \Phi^{-1}:\mK_1'\to P(I_V^{10k})$. Indeed, given any $\eps'>0$, take $\eps>0$ sufficiently small so that $2\cdot (20kC_V+1) \eps < \frac{\eps'}{2}$. Fix the corresponding map $\vartheta^{(\eps)}$ and the elements of our sequence $U_{n_1},\dots,U_{n_s}$ that form a partition of $\R^{2k}$ and such that $\vartheta^{(\eps)}$ is constant on each of these sets. Then, the finite set of inequalities
\begin{equation}\label{eq:dist-tau-U}
\dist(\tau_{U_{n_j}},\tau'_{U_{n_j}})<\frac{\eps'}{2}, \quad j=1,\dots,s
\end{equation}
implies $\dist(\pi_*\tau,\pi_*\tau')<\eps'$. Finally, the distances~\eqref{eq:dist-tau-U} can be estimated in terms of distance in $\mP$ between $\Phi(\tau)$, $\Phi(\tau')$, thus implying the (uniform) continuity of the map $\pi_*\circ \Phi^{-1}$.
}
\end{proof}


\section{The Proof of the Main Proposition}\label{s.proofofprop}

The goal of this section is to prove Proposition~\ref{p:main}. We start by rewriting the measure condition. Namely, we have the following lemma.

\begin{lemma}\label{l.measureconditionfailure}
A violation of the measure condition for some probability distribution on $\SL(2, \R)$ implies that at least one of the following two statements holds:
\begin{itemize}
\item[(F)] There exist subsets $F, F'\subset \RP^1$, consisting of one or two points, such that $AF=F'$ for almost all matrices~$A$.
\item[(SO)] There exist $B_1, B_2\in \SL(2,\R)$ such that ${B_2 A B_1^{-1}}\in \SO(2)$ for almost all  matrices~$A$.
\end{itemize}
\end{lemma}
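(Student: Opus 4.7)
The plan is to reformulate the measure-condition failure as preservation of a single measure by a subgroup of $\SL(2,\R)$, and then invoke a Furstenberg-type classification. Suppose the measure condition fails, so there exist Borel probabilities $\msp_1, \msp_2$ on $\RP^1$ with $(f_A)_* \msp_1 = \msp_2$ for almost every $A$. Fix one such $A_0$ in the support of the distribution. Then for almost every $A$, the product $M := A_0^{-1} A$ lies in the closed subgroup
\[
H := \{ M \in \SL(2,\R) : (f_M)_* \msp_1 = \msp_1 \}.
\]
It therefore suffices to establish the dichotomy: either $\supp \msp_1$ has cardinality at most two (leading to (F)), or $H$ is conjugate into $\SO(2)$ by some $B \in \SL(2,\R)$ (leading to (SO)).

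Suppose first that $|\supp \msp_1| \leq 2$. Put $F := \supp \msp_1$ and $F' := \supp \msp_2 = f_{A_0}(F)$. Since each $f_A$ is a homeomorphism of $\RP^1$ mapping $\supp \msp_1$ bijectively onto $\supp (f_A)_* \msp_1 = \supp \msp_2$, we get $A \cdot F = F'$ for almost every $A$ with $|F| = |F'| \in \{1,2\}$; this is alternative (F).

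Suppose instead that $|\supp \msp_1| \geq 3$. I would show that $H$ is compact by a standard contraction argument. If $H$ were unbounded, pick $A_n \in H$ with $\|A_n\| \to \infty$ and use a $KAK$ decomposition $A_n = K_n D_n L_n$ with $K_n, L_n \in \SO(2)$, $D_n = \mathrm{diag}(\lambda_n, \lambda_n^{-1})$, and $\lambda_n \to \infty$. Passing to a subsequence with $K_n \to K$ and $L_n \to L$, the projective maps $f_{A_n}$ converge, outside a single exceptional direction determined by $L$, to a constant map whose value $p$ is determined by $K$. Consequently, every weak accumulation point of $(f_{A_n})_* \msp_1$ is supported on at most two points (namely $p$ and the image of the exceptional direction). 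Since this sequence is constantly equal to $\msp_1$, the limit must equal $\msp_1$, forcing $|\supp \msp_1| \leq 2$, a contradiction.

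With $H$ compact, the fact that every compact subgroup of $\SL(2,\R)$ lies in a maximal compact, all of which are $\SL(2,\R)$-conjugate to $\SO(2)$, produces $B \in \SL(2,\R)$ with $B H B^{-1} \subset \SO(2)$. Setting $B_1 := B$ and $B_2 := B A_0^{-1}$ (both in $\SL(2,\R)$ since $\det A_0 = 1$), for almost every $A$ we obtain
\[
B_2 A B_1^{-1} = B (A_0^{-1} A) B^{-1} \in B H B^{-1} \subset \SO(2),
\]
which is alternative (SO). The main technical obstacle is the contraction step: verifying pointwise convergence of $f_{A_n}$ off the exceptional direction and controlling the weak limits of the pushforwards carefully enough to conclude that $\msp_1$ itself is supported on at most two points. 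The remaining ingredients---conjugation of compact subgroups into $\SO(2)$ and the bookkeeping to produce $B_1, B_2$---are routine.
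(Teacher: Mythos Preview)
Your argument is correct, but it follows a different route from the paper. The paper does not pass through the stabilizer subgroup $H$ at all: it splits according to whether $\msp_1$ has an atom of weight $\ge \tfrac{1}{2}$. If so, the sets of maximal-weight atoms of $\msp_1$ and $\msp_2$ each have at most two points and are mapped to one another by every $f_A$, giving~(F). If not, the paper views $\RP^1$ as $\partial\bbH$ and takes the (Busemann/conformal) barycenters $p,p'\in\bbH$ of $\msp_1,\msp_2$; equivariance of the barycenter forces $f_A(p)=p'$ for a.e.~$A$, and choosing $B_1,B_2$ to send $p,p'$ to the center of the disc yields~(SO) in one stroke.

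Your approach instead fixes a single $A_0$, reduces to the stabilizer $H=\{M:(f_M)_*\msp_1=\msp_1\}$, and splits on $|\supp\msp_1|$. The contraction step via $KAK$ shows $H$ is compact when $|\supp\msp_1|\ge 3$, and then you invoke the fact that compact subgroups of $\SL(2,\R)$ are conjugate into $\SO(2)$. This is the classical Furstenberg-type classification argument and is perfectly valid; note, though, that the last fact you cite is itself usually proved by producing a fixed point in~$\bbH$ (e.g.\ via averaging or the Cartan fixed-point theorem), so the two routes are closer than they appear. The paper's barycenter argument is shorter and handles the two measures $\msp_1,\msp_2$ symmetrically without the $A_0$-trick; your argument is more self-contained in that it does not require knowing the barycenter construction for measures, only the group-theoretic statement about maximal compacts. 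One minor imprecision worth tightening: in the contraction step you should pass to a further subsequence so that $f_{A_n}(v^*)$ converges (where $v^*$ is the exceptional direction), before asserting that the weak limit is supported on at most two points; you flag this yourself, and it is routine.
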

\begin{proof}
Indeed, assume that one has $(f_A)_*\msp=\msp'$ for some measures $\msp,\msp'$ on $\RP^1$ for almost all  matrices~$A$. If $\msp$ has an atom of weight at least $\frac{1}{2}$, then the sets $F,F'$ of atoms of maximal weight of $\msp$ and $\msp'$ respectively consist of at most two points and satisfy the first conclusion of the lemma. Otherwise, consider the projective line as the boundary of the hyperbolic plane $\bbH$. There are points $p,p'\in \bbH$ that are barycenters of the measures $\msp,\msp'$ respectively. Sending these points to the center of the disc by some maps $B_1,B_2$, we conjugate the action to one preserving the center, which is exactly the action of $\SO(2)$.
 \end{proof}

%

Notice that it suffices to establish the following key lemma.

\begin{lemma}\label{l:three}
Consider a set $V\subset \R^{d}$ that contains three points, {which satisfy the following property: for some intermediate index $i_0$, we have that for any two of these points, their coordinates differ at least at one position $i<i_0$ and at least at one position $i>i_0+1$}. Then there exists a finite set $X=X_{\xi}$ of energies such that for any $E\notin X$, there are no $p,p'\in \CP^1$ such that for all $\vec{v}\in V$, the transfer matrix $T_{\vec{v},E}$ sends~$p$ to~$p'$.
\end{lemma}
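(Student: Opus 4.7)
\textbf{Proof plan for Lemma~\ref{l:three}.}

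Let $\vec v^{(1)}, \vec v^{(2)}, \vec v^{(3)}$ be the three points, set
\[
M(E) := T_{\vec v^{(1)},E}^{-1}T_{\vec v^{(2)},E}, \qquad N(E) := T_{\vec v^{(1)},E}^{-1}T_{\vec v^{(3)},E},
\]
and let $J = \bigl(\begin{smallmatrix} 0 & 1 \\ -1 & 0\end{smallmatrix}\bigr)$. Using $T^\top J T = J$ for $T\in \SL(2,\R)$, one has $(T_{\vec v^{(j)},E}\hat p)^\top J(T_{\vec v^{(k)},E}\hat p) = \hat p^\top J(T_{\vec v^{(j)},E}^{-1}T_{\vec v^{(k)},E})\hat p$, which vanishes precisely when $\hat p$ is an eigenvector of $T_{\vec v^{(j)},E}^{-1}T_{\vec v^{(k)},E}$. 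Hence the existence of $p,p' \in \CP^1$ with $T_{\vec v^{(j)},E}\cdot p = p'$ for $j=1,2,3$ is equivalent to $M(E)$ and $N(E)$ sharing a projective eigenvector. The entries of $M(E)$ and $N(E)$ are polynomial in $E$. The projective fixed points of $X\in \SL(2,\R)\setminus\{\pm I\}$ are the zeros of the binary quadratic form $Q_X(\hat p) := \hat p^\top J X \hat p$, so $M(E)$ and $N(E)$ have a common fixed point iff the resultant $R(E) := \mathrm{Res}(Q_{M(E)}, Q_{N(E)}) = 0$. Since $R(E)$ is a polynomial in $E$, its zero set is either finite or all of $\R$, and it suffices to exhibit a single $E^*$ with $R(E^*)\neq 0$.

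To produce $E^*$ I would analyse the regime $E\to+\infty$. The key simplifying identity is
\[
\begin{pmatrix} E-u & -1 \\ 1 & 0 \end{pmatrix}\begin{pmatrix} E-u' & -1 \\ 1 & 0 \end{pmatrix}^{-1} = \begin{pmatrix} 1 & u'-u \\ 0 & 1\end{pmatrix},
\]
so that telescoping the two transfer-matrix products position by position from the inside converts $M(E)$ and $N(E)$ into products of parabolic shears $\bigl(\begin{smallmatrix} 1 & \ast \\ 0 & 1\end{smallmatrix}\bigr)$ (or their lower-triangular analogues), with shear amounts equal to the coordinate differences $v^{(2)}_\ell - v^{(1)}_\ell$, resp.\ $v^{(3)}_\ell - v^{(1)}_\ell$, at positions of disagreement, conjugated by partial transfer matrices. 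Diagonalising the ``free'' matrix $\bigl(\begin{smallmatrix} E & -1 \\ 1 & 0\end{smallmatrix}\bigr) = P_E\, \mathrm{diag}(\lambda_+, \lambda_-)\, P_E^{-1}$, with $\lambda_+ \sim E$ and $\lambda_- \sim 1/E$, equips these partial transfer matrices with explicit asymptotic expansions in $1/E$, and thereby also $Q_{M(E)}, Q_{N(E)}$, and $R(E)$.

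The main obstacle is verifying that the leading-order term of $R(E)$ at $E\to+\infty$ does not vanish. Here the hypothesis enters decisively: it forces the triples $(\vec v^{(j)}|_{\{1,\dots,i_0-1\}})_j$ and $(\vec v^{(j)}|_{\{i_0+2,\dots,d\}})_j$ to each be pairwise distinct, so after telescoping both $M(E)$ and $N(E)$ carry non-trivial shears on both sides of the buffer $\{i_0,i_0+1\}$. This is necessary --- for instance, if all three vectors differed only at position $d$, the matrices $M(E)$ and $N(E)$ would be parabolics sharing the fixed point $[1:0]$ for every $E$, so $R$ would vanish identically. With the hypothesis, the ``left'' and ``right'' shears contribute to the eigenvector expansions of $M(E^*)$ and $N(E^*)$ at distinct orders in $1/E$ that are amplified differently by the scalings $\lambda_\pm$; a careful calculation shows the two leading fixed points on $\RP^1$ are distinct, producing $R(E^*)\neq 0$ for $E^*$ sufficiently large.
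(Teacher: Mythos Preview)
Your overall two–step strategy coincides with the paper's: first argue that the set of bad energies is algebraic, and then rule out ``all of $\R$'' by analysing large $|E|$. Your resultant formulation is a clean way to make the algebraicity step explicit; the paper only alludes to this step in the introduction and concentrates the written proof of Lemma~\ref{l:three} entirely on the large-$E$ analysis. So on that first step your write-up is arguably more complete than the paper's.

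The real difference is in the large-$E$ step. The paper does \emph{not} diagonalise the free matrix or expand eigenvectors. Instead it proves a short projective tracking lemma (Lemma~\ref{l:eps}): once an orbit $z_{j}$ leaves the $\eps$-disc around $E-v_{j+1}$, all subsequent reciprocals $1/z_{j_2}$ are trapped in the $\eps$-disc around $E-v_{j_2}$. Applying this at the first index of disagreement $i_-(v,v')<i_0$ (where the two relevant discs are disjoint) forces the lemma's hypothesis for at least one of every pair, hence for at least two of the three vectors; then comparing at the last index of disagreement $i_+(v,v')>i_0+1$ gives two equal numbers lying in disjoint discs, a contradiction. This argument works uniformly over $\CP^1$ and never needs to locate the eigenvectors of $M(E)$ or $N(E)$.

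Your sketched route can be made to work, but the ``careful calculation'' you defer is not minor. After telescoping, $M(E)$ is a product of \emph{several} conjugated shears (one for each index of disagreement between $\vec v^{(1)}$ and $\vec v^{(2)}$, not just two), and likewise for $N(E)$; the indices $i_{\pm}$ for $M$ and for $N$ need not coincide. You then need that \emph{all four} pairings between the two fixed points of $M(E)$ and the two of $N(E)$ fail --- your phrase ``the two leading fixed points are distinct'' addresses only one pairing. In simple configurations one can indeed check that the four asymptotic ratios are governed by distinct combinations of the extreme coordinate differences and are separated by different powers of $E$, so the hypothesis $i_-<i_0$, $i_+>i_0+1$ and pairwise distinctness does the job; but carrying this through in full generality requires genuine bookkeeping of the leading and subleading terms. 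Also note that you switch from $\CP^1$ to $\RP^1$ at the end --- stay over $\C$ throughout, since that is what the resultant controls (for large real $E$ both $M$ and $N$ turn out to be hyperbolic, so this is ultimately harmless, but it should be said). The paper's tracking argument sidesteps all of this and is the cleaner way to finish.
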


\begin{proof}[Proof of Proposition~\ref{p:main} assuming Lemma~\ref{l:three}]
The conclusion of Lemma~\ref{l:three} handles immediately the part $|F|=1$ of the (F) case. {It also handles the (SO) case, as in that case the image of the point $p=B_1^{-1}([1:i])\in \CP^1$ should almost surely be $p'=B_2^{-1}([1:i])\in \CP^1$.}

Finally, assume that there are sets $F, F'$ of two elements each, and five maps $T_1,\dots,T_5$ that send $F$ to $F'$; fix a point $p\in F$. Then (by the generalized pigeon-hole principle) there exists a point $p'\in F'$ such that $T_i(p)=p'$ for at least three indices $i\in \{1,\dots,5\}$.
\end{proof}


To conclude the proof of Proposition~\ref{p:main} it thus suffices to prove Lemma~\ref{l:three}. We will need a lemma, describing the compositions of the projective maps associated to the Schr\"odinger transfer matrices. For every $a,E$ let
\[
R_{a,E}(z) = \frac{1}{(E-a)-z}
\]
be the projectivization $f_A$ of the  Schr\"odinger matrix
\[
	A= \begin{pmatrix}
                                                 E-a & -1 \\
                                                 1 & 0 \\
	\end{pmatrix},
\]
written in the corresponding affine chart $z=\frac{\psi_2}{\psi_1}$. Also, denote by
\[
	R_{v,E,j} := R_{v_j,E}\circ\dots\circ R_{v_1,E}
\]
the projectivization of the composition of first $j$ maps, associated to the vector of potential $v$. We then have the following lemma.

\begin{lemma}\label{l:eps}
Let $v\in\R^d$ be given. For every $\eps>0$, there exists $r_0>0$ such that for all $|E|>r_0$, the following holds. If for some $z,E$ and some intermediate index $j_1$ one has
\begin{equation}\label{eq:not-in}
R_{v,E,j_1-1}(z)\notin U_{\eps}(E-v_{j_1}),
\end{equation}
then for every $j_2\ge j_1+2$, one has
\begin{equation}\label{eq:in-eps}
\frac{1}{R_{v,E,j_2}(z)} \in U_{\eps}(E-v_{j_2}).
\end{equation}
\end{lemma}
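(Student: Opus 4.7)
The plan is to iterate the Möbius maps $R_{v_j,E}$ directly, using the explicit formula $R_{a,E}(w) = 1/((E-a) - w)$. Writing $w := R_{v,E,j_1-1}(z)$, the hypothesis \eqref{eq:not-in} says $|w - (E-v_{j_1})| \ge \eps$, so after applying the $j_1$-th map we get
\[
|R_{v,E,j_1}(z)| = \frac{1}{|(E-v_{j_1}) - w|} \le \frac{1}{\eps}.
\]
Thus immediately after step $j_1$, the projective coordinate is bounded in the affine chart, regardless of $|E|$.

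Next, I would prove by induction on $j \ge j_1 + 1$ that $|R_{v,E,j}(z)| \le \eps/2$, provided $|E|$ is chosen large enough. Set $M := \max_{1\le j\le d} |v_j|$ and pick $r_0 := M + 3/\eps$. For the base case $j = j_1 + 1$, the denominator $(E - v_{j_1+1}) - R_{v,E,j_1}(z)$ has modulus at least $|E| - M - 1/\eps \ge 2/\eps$ once $|E| > r_0$, so $|R_{v,E,j_1+1}(z)| \le \eps/2$. For the inductive step, if $|R_{v,E,j-1}(z)| \le \eps/2$, then $|(E - v_j) - R_{v,E,j-1}(z)| \ge |E| - M - \eps/2 \ge 2/\eps$ for the same $r_0$, giving $|R_{v,E,j}(z)| \le \eps/2$.

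Finally, for $j_2 \ge j_1 + 2$ we use the algebraic identity
\[
\frac{1}{R_{v,E,j_2}(z)} = (E - v_{j_2}) - R_{v,E,j_2-1}(z),
\]
together with $j_2 - 1 \ge j_1 + 1$ and the inductive bound $|R_{v,E,j_2-1}(z)| \le \eps/2 < \eps$, which yields \eqref{eq:in-eps}. No real obstacle is present: the only point requiring a bit of care is choosing $r_0$ uniformly so that both the ``first shrinking step'' (from bound $1/\eps$ down to $\eps/2$) and the ``stability'' estimate (bound $\eps/2$ preserved) hold for the same threshold, which the choice $r_0 = M + 3/\eps$ accomplishes.
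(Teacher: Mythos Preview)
Your argument is essentially identical to the paper's proof: both obtain $|z_{j_1}|\le 1/\eps$ from the hypothesis, then push the iterates into a small neighborhood of~$0$ and keep them there by induction, finishing via the identity $1/R_{v,E,j_2}(z)=(E-v_{j_2})-z_{j_2-1}$. The only caveat is that your inductive step needs $|E|-M-\eps/2\ge 2/\eps$, which with $r_0=M+3/\eps$ only holds for $\eps\le\sqrt{2}$; this is trivially fixed by assuming without loss of generality that $\eps\le 1$ at the outset, exactly as the paper does.
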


Here $U_{\eps}$ denotes the $\eps$-neighborhood in~$\C$, and in particular, any such neighborhood does not contain the infinity point of $\CP^1$.
\begin{proof}
Without loss of generality, we can assume $\eps\le 1$ (otherwise take $\min (\eps,1)$ instead). Choose
\[
r_0:=1+\max(|v_i|)+ \frac{2}{\eps},
\]
and denote by $(z_j)$ the sequence of images of $z$:
\[
z_j:=R_{v,E,j}(z), \quad j=1,2,\dots.
\]

By~\eqref{eq:not-in}, we have
\[
|z_{j_1-1}-(E-v_{j_1})|\ge \eps,
\]
and hence
\[
|z_{j_1}| = \frac{1}{|(E-v_{j_1})-z_{j_1-1}|} \le \frac{1}{\eps};
\]
see Fig.~\ref{f:large-E} for an illustration. Next, note that due to the choice of $r_0$, if for some $j$ we have $|z_j|\le \frac{1}{\eps}$, then
\[
|(E-v_{j+1})-z_{j}| \ge r_0 - |v_{j+1}| - \frac{1}{\eps} > \frac{1}{\eps},
\]
and hence
\[
|z_{j+1}|  = \frac{1}{|(E-v_{j+1})-z_{j}|} < \eps.
\]

\begin{figure}
\includegraphics[width=0.9\textwidth]{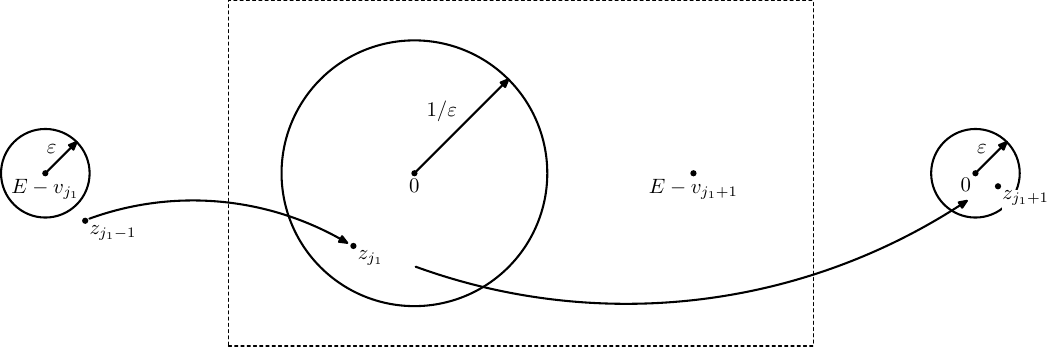}
\caption{Iterating a Schr\"odinger cocycle for large $E$}\label{f:large-E}
\end{figure}

In particular, by induction we have
\[
|z_j|<\eps
\]
for all $j\ge j_1+1$;  applying this for $j=j_2-1$, we get
\[
\frac{1}{R_{v,E,j_2}(z)} = E-v_{j_2}-z_{j_2-1} \in U_{\eps}(E-v_{j_2}),
\]
thus concluding the proof of~\eqref{eq:in-eps}.
\end{proof}

We are now ready to conclude the proof of Lemma~\ref{l:three} and hence of Proposition~\ref{p:main}.

\begin{proof}[Proof of Lemma~\ref{l:three}]

Without loss of generality, we can assume that $V$ consists of exactly three points that satisfy the assumptions of Lemma~\ref{l:three}.

For every pair of different $v,v'\in V$, consider the first and the last coordinates at which they differ:
\[
i_-(v,v'):=\min (i \mid v_i \neq v'_i), \quad i_+(v,v'):=\max (i \mid v_i \neq v'_i).
\]
Let 
\[
\eps(v,v') := \min (|(v-v')_{i_-(v,v')}|, |(v-v')_{i_+(v,v')}|)
\]
be a lower bound for the corresponding differences. Finally, set
\[
\eps:=\frac{1}{3} \min \{ \eps(v,v') \mid v,v'\in V, \,\, v\neq v' \}.
\]
Applying Lemma~\ref{l:eps} for this $\eps$ and each of the three points $v\in V$, we get values $r_0=r_0(v)$. Let us show that for $R:=\max_{v\in V} r_0(v)$, the conclusion of Lemma~\ref{l:three} holds for all $|E|>R$.

Indeed, take any two $v,v'\in V$ and consider the index $j_1:=i_-(v,v')$. Let us 
obtain a contradiction to the assumption that for
some $p, p'\in \CP^1$
\[
\forall v\in V \quad T_{v,E}\ p = p'.
\]
As previously, let $z$ be the (possibly infinite) coordinate of $p$ in the chart $\frac{\psi_2}{\psi_1}$. Take any pair of different $v,v'\in V$, and let $j_1$ be the corresponding index of first difference $j_1:=i_-(v,v')<i_0$. Note that as $|v_{j_1}-v'_{j_1}|>2\eps$, one has
\[
U_{\eps}(E-v_{j_1}) \cap U_{\eps}(E-v'_{j_1}) = \emptyset,
\]
hence the common image
\[
z_{j_1-1} = R_{v,E,j_1-1}(z) = R_{v',E,j_1-1}(z)
\]
does not belong to at least one of these neighborhoods. Hence, for any pair $v,v'\in V$, the assumption of Lemma~\ref{l:eps} holds for some $j_1<i_0$ for at least one of them, and hence holds for all $v\in V$, except at most one. As $V$ contains three points, there exist two $v,v'\in V$, for both of which the estimate~\eqref{eq:in-eps} holds for all $j_2>i_0+1$.

Now, take $j_2=i_+(v,v')>i_0+1$. Then, from the assumption
\[
T_{v,E}\ p = T_{v',E}\ p = p'
\]
we see that the images of $p$ at the index $j_2+1$ coincide, as both are preimages of $p'$ under coinciding parts of the potential; hence,
\[
\frac{1}{R_{v,E,j_2}(z)} = \frac{1}{R_{v',E,j_2}(z)}.
\]
At the same time, from the conclusion of Lemma~\ref{l:eps} for $v$ and $v'$, this value should belong to each of the neighborhoods
\[
U_{\eps}(E-v_{j_2}) \quad \text{and} \quad U_{\eps}(E-v'_{j_2}),
\]
and these neighborhoods are disjoint due to the choice of~$\eps$. We have obtained the desired contradiction.
\end{proof}

\section{Proof of Localization}

Let us bring together all the pieces of the puzzle and complete the proof of Theorem \ref{t.main} and Theorem \ref{t.dynloc}.

\subsection{Proof of Spectral Localization}

Here we show that  Theorem~\ref{t.vector} combined with Lemma \ref{l:applicable} implies spectral localization. The proof mimics the arguments of the proof of Theorem 1.1 from \cite{GK3}.

We will need the following result, which is usually referred to as ``Shnol's Theorem'', due to a similar result in the paper \cite{Sch} (see also \cite{Gl1, Gl2}):

\begin{theorem}[Shnol's Theorem]\label{t.Schnol}
Let $H: \ell^2(\Z)\to \ell^2(\Z)$
be an operator of the form
\[
[H \psi](n) = \psi(n-1) + \psi(n + 1) + V (n)\psi(n),
\]
with a bounded potential $\{V (n)\}_{n\in \Z}$, and let $B\subseteq \R$ be any Borel subset of the real line.  If every polynomially bounded solution to $H \psi = E\psi$, $E\in B$, is in fact exponentially decreasing, then $H$  has pure point spectrum on $B$, with exponentially decaying eigenfunctions.

A similar statement holds for operators on $\ell^2(\N)$ with a Dirichlet boundary condition.
\end{theorem}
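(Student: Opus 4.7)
The plan is to invoke the classical Schnol--Simon theory of generalized eigenfunctions and then combine it with the hypothesis on polynomially bounded solutions. Namely, for any cyclic vector $\varphi \in \ell^2(\Z)$ and associated spectral measure $\mu_\varphi$ of $H$, I would show that $\mu_\varphi$-almost every $E$ admits a nontrivial polynomially bounded solution of $H\psi = E\psi$. Once this is established, the hypothesis immediately forces, for $E \in B$, any such generalized eigenfunction to lie in $\ell^2(\Z)$, so that $E$ is actually an eigenvalue with an exponentially decaying eigenfunction.

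The core technical step is the Schnol--Simon statement. I would fix a cyclic vector $\varphi$ (for instance, use $\delta_0 + \delta_1$ on $\Z$, or iterate the argument for $\delta_0$ and $\delta_1$ separately since the two together are cyclic), diagonalize $H$ on its cyclic subspace, and use the functional calculus to produce, for $\mu_\varphi$-a.e.\ $E$, a polynomially bounded generalized eigenfunction. Concretely, one sets $\psi_E(n) := \langle \delta_n, dP(E) \varphi \rangle / d\mu_\varphi(E)$ (interpreted in the appropriate Radon--Nikodym/distributional sense), checks that this defines a formal solution of the eigenvalue equation, and verifies polynomial boundedness via a standard $\ell^2$-estimate: for any $\alpha > 1/2$,
\begin{equation*}
\int_\R \sum_{n \in \Z} \frac{|\psi_E(n)|^2}{(1+|n|)^{2\alpha}}\, d\mu_\varphi(E) = \sum_{n \in \Z} \frac{1}{(1+|n|)^{2\alpha}} < \infty,
\end{equation*}
which shows $|\psi_E(n)| \lesssim (1+|n|)^\alpha$ outside a $\mu_\varphi$-null set.

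With this in hand, the conclusion is immediate. Let $\mathcal{P}_B \subset B$ be the set of $E \in B$ for which a nontrivial polynomially bounded solution exists; by hypothesis, for every such $E$ any polynomially bounded solution is exponentially decaying, hence lies in $\ell^2(\Z)$, so $E$ is an eigenvalue. Since the collection of eigenvalues is countable (associated eigenvectors form an orthonormal family), $\mathcal{P}_B$ is countable, and Schnol--Simon gives $\mu_\varphi(B \setminus \mathcal{P}_B) = 0$. Therefore $\mu_\varphi|_B$ is pure point, and all eigenfunctions with eigenvalues in $B$ are exponentially decaying by assumption. Since $\varphi$ was a cyclic vector, this yields pure point spectrum for $H|_B$. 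The half-line statement is identical, working with $\delta_1$ as a cyclic vector and the Dirichlet transfer matrix formalism.

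The main obstacle is really just bookkeeping around the Schnol--Simon step, in particular justifying the pointwise existence of $\psi_E$ and verifying it solves the eigenvalue equation $\mu_\varphi$-a.e.\ in a way that does not require any regularity of the spectral measure. This is a classical construction (as indicated by the references \cite{Sch, Gl1, Gl2}), and no new ideas beyond those are needed here; everything beyond that is a one-line deduction from the hypothesis.
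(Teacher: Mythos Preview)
The paper does not give its own proof of this statement; it quotes it as a classical result and points to \cite[Theorem~7.1]{Kir}, \cite[Theorem~2.4.2]{DF22}, \cite[Lemma~2.6]{JZ}, \cite{H}, and \cite[Theorem~1.1]{Sim}. Your sketch is precisely the standard Schnol--Simon argument that those references carry out, so there is nothing substantive to compare beyond noting that you have supplied what the paper only cites. One small caution: $\delta_0+\delta_1$ need not be cyclic (already for the free Laplacian on $\Z$ no single vector is cyclic, since the spectrum has multiplicity two), so of the two options you mention parenthetically, running the argument for $\delta_0$ and $\delta_1$ separately and using that $\{\delta_0,\delta_1\}$ is a cyclic set is the one that actually works in general.
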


The formal proof in the discrete case can be found, for instance, in \cite[Theorem~7.1]{Kir} and \cite[Theorem~2.4.2]{DF22}; we also refer the reader to some improved versions of this result in~\cite[Lemma~2.6]{JZ} or~\cite{H}. In the continuum case, Theorem~\ref{t.Schnol} follows also from \cite[Theorem~1.1]{Sim}.

\begin{proof}[Proof of Theorem~\ref{t.main}]
Let us consider the finite set $\mE\subset \R$ defined by (\ref{eq:def-mE}). The complement $\R\backslash \mE$ can be represented as a countable union of compact intervals. {Since almost surely, the finite set $\mE$ carries no weight with respect to the spectral measure,} it is enough to show that on each such compact interval, almost surely the operator (\ref{e.model}) has pure point spectrum {with exponentially decaying eigenfunction}.

Let us fix a closed interval $J\subset  \R\backslash \mE$. Split the sequence $\{\xi_n\}$ into blocks of length $10k$,
$$
\{\xi_n\}_{n\in\Z}=\bigcup_{j\in \Z}\{\xi_{10kj+1}, \xi_{10kj+2}, \ldots, \xi_{10k(j+1)}\}.
$$
Fix any sequence $\bar a=\{\bar a_j\}$ of vectors $\bar a_j=(a_{1, j}, a_{2, j}, \ldots, a_{k, j})\in (\text{supp}\,\nu)^k$, $j\in \Z$, and set the first $k$ values of the $j$-th block to be $(a_{1, j}, a_{2, j}, \ldots, a_{k, j})$, therefore ``freezing'' the values of $\{\xi_{10kj+1}, \xi_{10kj+2}, \ldots, \xi_{10kj+k}\}$.

Consider the (conditional) random potential
$ V_{\bar a}(n)=        g(\xi_n, \xi_{n+1}, \ldots, \xi_{n+k-1}),$ and denote by $H_{\bar a}$ the random operator defined by potential $V_{\bar a}(n)$. Denote by $A_{j, \bar a, E}$, $E\in J$, the product of transfer matrices over the $j$-th block of the potential values:
$$
A_{j, \bar a}=\left(
               \begin{array}{cc}
                 E-V_{\bar a}(10k(j+1)) & -1 \\
                 1 & 0 \\
               \end{array}
             \right) \cdots \left(
               \begin{array}{cc}
                 E-V_{\bar a}(10kj+1) & -1 \\
                 1 & 0 \\
               \end{array}
             \right).
$$
Notice that the matrices $\{A_{j, \bar a}\}$ are independent, and, due to Lemma \ref{l:applicable}, satisfy all the assumptions of  Theorem~\ref{t.vector}. This implies that any polynomially bounded solution of the equation $H_{\bar a} \psi = E\psi$, $E\in J$, is in fact exponentially decreasing. Hence, due to Shnol's theorem, the operator $H_{\bar a}$ almost surely (conditionally on the ``frozen'' values) has pure point spectrum in $J$ with exponentially decaying eigenfunctions.

Finally, an application of Fubini's Theorem implies that the operator $H$ almost surely has pure point spectrum in $J$ {with exponentially decaying eigenfunctions.}
\end{proof}

\subsection{Proof of Dynamical Localization}

Here we justify Theorem \ref{t.dynloc}.
\begin{proof}[Proof of Theorem \ref{t.dynloc}]
Let us fix a compact interval  $J \subset \R \setminus \mathcal{E}$. We need to establish that the operator $H$ defined by (\ref{e.model}) satisfies the following property:

\begin{defi}
Let $H$ be a self-adjoint operator on $\ell^2(\mathbb{Z})$. The operator $H$ has {\it semi-uniform dynamical localization (SUDL)} on a closed interval $J\subset \R$ if there is $\beta>0$ such that for any $\eps>0$ there is a constant $C_\eps>0$ so that for all $n, m\in \mathbb{Z}$
$$
	\sup\limits_{t \in \R}|\langle\delta_n, e^{-itH}\chi_{J}(H)\delta_m\rangle|\le C_\eps e^{\eps |m|-\beta|n-m|}.
$$
\end{defi}
We will argue that $H$ has a different property, namely (SULE):

\begin{defi}
A self-adjoint operator $H:\ell^2(\mathbb{Z})\to \ell^2(\mathbb{Z})$ has {\it semi-uniformly localized eigenfunctions (SULE)} on a closed interval $J\subset \R$ if $H$ has a set $\{\phi_n\}_{n=1}^\infty$ of orthonormal eigenfunctions {that is complete in the sense that the closure of its span is $\mathrm{Ran} \, \chi_J(H)$}, and there are $\beta>0$ and $\mmax_n\in \mathbb{Z}$, $n\in \mathbb{N}$, such that for each $\eps>0$ there exists a constant $C_\eps$ {so that}
\begin{equation}\label{eq:SULE}
 |\phi_n(m)|\le C_\eps e^{\eps|\mmax_n|-\beta|m-\mmax_n|}
\end{equation}
 for all $m\in \mathbb{Z}$ and $n\in \mathbb{N}$.
\end{defi}
{Theorem 7.5 from \cite{DJLS} shows that (SULE) $\Rightarrow$ (SUDL). In fact, this result is local in energy. A very minor modification of the proof of \cite[Theorem 7.5]{DJLS} given in that paper shows that if (SULE) holds on some interval $J$, then (SUDL) holds on $J$. The only changes necessary are to let the sum in the first display of that proof run over the (SULE) orthonormal basis of $\mathrm{Ran} \, \chi_J(H)$ and to then note that \cite[(7.4)]{DJLS} still holds.}\footnote{{In order to see the latter fact, one observes that while \cite[(7.2b)]{DJLS} will now in general fail, one still has $\sum_n |\phi_n(m)|^2 \le 1$ by Bessel's inequality, and only this inequality is needed when deriving \cite[(7.4)]{DJLS}.}}

Now, repeating the steps from the proof of Theorem \ref{t.main}, let us fix any sequence $\bar a=\{\bar a_j\}$ of vectors $\bar a_j=(a_{1, j}, a_{2, j}, \ldots, a_{k, j})\in (\text{supp}\,\nu)^k$, $j\in \Z$, and use it to ``freeze'' the values of $\{\xi_{10kj+1}, \xi_{10kj+2}, \ldots, \xi_{10kj+k}\}$. The random matrices $A_{j, \bar a, E}$, $E\in J$, are independent and  satisfy the conditions (B1)--(B3) from Section \ref{ss.introparamfurst}. Therefore, almost surely every eigenvector of $H_{\bar a}$ corresponding to an eigenvalue from $J$ must decay exponentially. Moreover, a verbatim repetition of the arguments from \cite[Section~4.4]{GK3} applied to the product of the random matrices $A_{j, \bar a, E}$ shows that if one takes $\beta=\frac{h}{4}$, where $h>0$ is given by Proposition \ref{t.vector}, then almost surely for any $\eps>0$, there exists $C_\eps>0$ such that for every eigenfunction $\psi$ with an eigenvalue in $J$, there exists $\widehat m\in \N$ such that
$$
|\phi(m)|\le C_\eps e^{\eps|\mmax|-\beta|m-\mmax|}
$$
for all $m\in \Z$.

Notice that the value $h>0$ can be chosen uniformly in the choice of $\bar a$. Therefore, an application of Fubini's Theorem implies that the operator $H$ almost surely has (SULE) (and hence (SUDL)) on the interval $J$.
\end{proof}

\section{Exceptional Energies: Example }\label{s.last}

A priori it is not obvious that allowing for the existence of exceptional energies in Proposition~\ref{p:main} is not an artifact of the proof. While the existence of exceptional energies is well known in the random dimer model \cite{DWP} (see also \cite[Section~5.11]{DF24}), the random dimer model does not formally fit the framework of this paper\footnote{It does, however, fit the more general framework of \cite{ADZ1, ADZ2}, as discussed in \cite{ADZ1}.} (but see Remark \ref{r.r}, part c)). In this section we demonstrate that in our setting, exceptional energies also cannot be avoided in general.

Let us set $\{\xi_i\}$ to be an i.i.d.\ sequence of zeroes and ones chosen randomly with the same probability~$1/2$. In other words, $\mathrm{supp} \, \nu = \{ 0,1 \}$ and $\nu(\{0\}) = \nu(\{1\}) = \frac12$. Consider $g:\mathbb{R}^2\to \mathbb{R}$ defined by
\begin{equation}\label{eq:g}
g(\xi_n, \xi_{n+1})=\xi_n-\xi_{n+1}.
\end{equation}

\begin{prop}\label{p.example}
The Lyapunov exponent of the Schr\"odinger cocycle defined by the random potential given by~\eqref{e.model} and~\eqref{eq:g} vanishes at the energy value $E=0$. Moreover, almost sure semi-uniform dynamical localization (SUDL) without projecting away from the exceptional energies fails for the associated family of Schr\"odinger operators.
\end{prop}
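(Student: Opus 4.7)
The plan is to split the proposition into two parts: (a) the Lyapunov exponent $\lambda(E)$ vanishes at $E=0$, and (b) SUDL fails on any compact interval containing $0$. The main work is in (a), after which (b) follows from a standard upper-semicontinuity argument.

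For (a), at $E=0$ the one-step transfer matrix is $\Pi_n = \begin{pmatrix} \xi_{n+1}-\xi_n & -1 \\ 1 & 0 \end{pmatrix}$. The first key step is a bounded gauge transformation that absorbs the telescoping nature of the potential: set $B_n := \begin{pmatrix} 1 & -\xi_n \\ 0 & 1 \end{pmatrix}$ and verify by direct computation (using $\xi_n^2=\xi_n$) that
\[
B_{n+1}\,\Pi_n\,B_n^{-1} \;=\; \begin{pmatrix} -\xi_n & -(\xi_n+1) \\ 1 & \xi_n \end{pmatrix} \;=:\; C_{\xi_n}.
\]
Thus the conjugated cocycle is an i.i.d.\ product of the trace-zero matrices $C_0,C_1\in\SL(2,\R)$, and since $B_n,B_n^{-1}$ are uniformly bounded this conjugation preserves the Lyapunov exponent.

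The next step is to identify a common invariant two-point set for $C_0$ and $C_1$ on $\RP^1$. Let $\tau_\pm := (1\pm\sqrt{5})/2$ and $p_\pm := [1:\tau_\pm]$. Using the identities $\tau_\pm^2=\tau_\pm+1$ and $\tau_+\tau_-=-1$, one checks that $C_0 p_\pm = p_\mp$ and $C_1 p_\pm = p_\mp$; both matrices swap these two projective points. Along the invariant lines $L^\pm := \R\,p_\pm$ the norm therefore evolves through explicit scalars: $C_0$ is a Euclidean rotation and contributes factor $1$, while $C_1$ multiplies the length of a vector in $L^+$ by $\tau_+^2$ and of a vector in $L^-$ by $\tau_-^2 = 1/\tau_+^2$. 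Since the iterate alternates $L^+\leftrightarrow L^-$ at every step irrespective of $\xi$, for $v_0\in L^+$ one obtains
\[
\log \frac{\|T_N v_0\|}{\|v_0\|} \;=\; (\log \tau_+^2)\,\bigl( \#\{n\le N:\ n\text{ odd},\ \xi_n=1\} - \#\{n\le N:\ n\text{ even},\ \xi_n=1\}\bigr),
\]
which is $o(N)$ almost surely by the strong law of large numbers applied separately to each count. Hence the growth exponent along $L^+\cup L^-$ is zero almost surely. Since the two Lyapunov exponents of an $\SL(2,\R)$-cocycle sum to $0$, and the growth rate along any deterministic direction equals either $\lambda_1$ (generically) or $\lambda_2$ (if that direction happens to be the Oseledec stable line), both exponents must vanish, and $\lambda(0)=0$.

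For (b), first note that $0$ lies in the almost sure spectrum $\Sigma$: by ergodicity arbitrarily long constant blocks of $\xi_n$ occur, producing long windows with $v_n\equiv 0$ that support approximate eigenvectors of the free Laplacian at $E=0$. Assume for contradiction that SUDL held on some compact interval $J\ni 0$ with rate $\beta>0$. Following the reduction SUDL $\Rightarrow$ SULE used in Section~5.2, almost surely every eigenfunction of $H$ with eigenvalue in $J$ decays exponentially at rate at least $\beta$, and each such eigenfunction provides an $\ell^2$-solution forcing $\lambda(E)\ge\beta$ at its eigenvalue. Under SULE the spectrum in $J$ is pure point, so eigenvalues are dense in $\Sigma\cap J$; combined with the upper semicontinuity of $\lambda$ in $E$ (valid since $\lambda(E) = \inf_N \tfrac{1}{N}\E\log\|T_N(E)\|$ is an infimum of continuous functions of $E$), this gives $\lambda\ge\beta$ throughout $\Sigma\cap J$, hence $\lambda(0)\ge\beta>0$, contradicting (a). The main obstacle in the whole argument is the algebraic coincidence in the first part: discovering the right bounded conjugation and verifying that $C_0$ and $C_1$ swap the same golden-ratio pair on $\RP^1$, which is what causes the cocycle to preserve a common two-atom measure on $\RP^1$ and makes $\lambda$ degenerate precisely at $E=0$.
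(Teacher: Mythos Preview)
Your argument is essentially correct, and both parts take a genuinely different route from the paper.

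\textbf{Part (a).} You use the same bounded gauge $B_n=P_{-\xi_n}$ as the paper (their $R_{\xi_j}$ arises from the identical telescoping), but the subsequent analysis differs. The paper observes $R_a^2=-I$, so consecutive equal letters cancel and the length of the reduced word performs a simple random walk; the law of the iterated logarithm then yields the quantitative bound $\log\|T_{n,\omega,0}\|\lesssim\sqrt{n\log\log n}$. You instead exhibit the common invariant two-point set $\{p_+,p_-\}$ and compute the norm evolution along it, reducing to the SLLN. Your route is structurally more transparent --- it exhibits exactly the case (F) of the paper's Lemma~\ref{l.measureconditionfailure} --- but it yields only $o(n)$, not the stretched-exponential bound the paper extracts and then uses in their proof of the second part.

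\textbf{Part (b).} Here the approaches diverge completely. The paper uses its stretched-exponential control on $\|T_{n,\omega,0}\|$, perturbs to nearby complex energies, and runs a Damanik--Tcheremchantsev argument to produce a lower bound on time-averaged transport that contradicts the SUDL bound directly. Your soft argument --- SUDL $\Rightarrow$ pure point with eigenfunctions decaying at rate $\ge\beta$ $\Rightarrow$ $\lambda(E_k(\omega))\ge\beta$ at eigenvalues $E_k(\omega)\to 0$ $\Rightarrow$ $\lambda(0)\ge\beta$ by upper semicontinuity --- is valid, but two points deserve mention. First, Section~5.2 invokes SULE $\Rightarrow$ SUDL, not the converse; what you actually use (and what suffices) is that time-averaging the SUDL bound gives $|\phi_k(n)\phi_k(m)|\le C_\eps e^{\eps|m|-\beta|n-m|}$ for simple eigenvalues, hence each $\phi_k$ decays at rate $\ge\beta$. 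Second, and more substantively, the step ``an $\ell^2$-solution decaying at rate $\ge\beta$ forces $\lambda(E)\ge\beta$'' is not automatic, because the eigenvalues $E_k(\omega)$ are $\omega$-dependent and Furstenberg--Kesten only gives the a.s.\ limit at \emph{fixed} $E$. What bridges this is the Craig--Simon theorem: almost surely, for \emph{all} $E$, $\limsup_n\tfrac1n\log\|T_{n,\omega,E}\|\le\lambda(E)$; combined with the $\liminf\ge\beta$ coming from the decaying eigenfunction, this yields $\lambda(E_k(\omega))\ge\beta$. With that ingredient supplied, your argument goes through and is considerably shorter than the paper's, at the cost of giving no quantitative transport information.
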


\begin{remark}
(a) More precisely, in this example we see the necessity of projecting away from the exceptional energies. That is,  the conclusion of Theorem~\ref{t.dynloc} (i.e., the bound \eqref{eq:edl}) cannot hold without the spectral projection $\chi_J(H)$. This follows for the example under consideration both in the whole-line case and in the half-line case.
\\[1mm]
(b) The underlying reason is that at energy $E = 0$, the transfer matrices almost surely are bounded by a stretched exponential. This is akin to one of the standard examples where the classical Furstenberg theorem fails --- random products of
$$
\left(
               \begin{array}{cc}
                 2 & 0 \\
                 0 & 1/2 \\
               \end{array}
             \right) \quad \text{and} \quad \left(
               \begin{array}{cc}
                 1/2 & 0 \\
                 0 & 2 \\
               \end{array}
             \right).
$$
\\[1mm]
(c) The examples in part (b) are a special case of one of the two settings in which the classical Furstenberg theorem for random products of $\mathrm{SL}(2,\R)$ matrices fails -- rotation-valued cocycles and cocycles whose projective action preserves a set of cardinality one or two; compare Lemma~\ref{l.measureconditionfailure}. Observe that in the example in part (b), there is a set of two directions that is preserved. Random Schr\"odinger operators with exceptional energies with the other types of exceptional behavior are observed in the random dimer model \cite{DWP} (rotation-valued) and in the random Kronig-Penney model \cite{DKS} (preserving a single direction). The discreteness of the set of exceptional energies in these general settings is guaranteed by \cite{BDFGVWZ2, DFHKS, DSS}.
\\[1mm]
(d) In the random dimer model and the random Kronig-Penney model, one can use the general method of Damanik-Tcheremchantsev \cite{DT} to prove positivity of suitably chosen transport exponents that measure transport on a power-law scale. This is because this method applies whenever the transfer matrices are bounded from above by a power of the distance, which is clearly the case in the elliptic and parabolic cases which arise in these two settings. In the present example, however, the exceptional matrices are hyperbolic, and hence the main result of \cite{DT} does not apply directly since it assumes power-law estimates at some energy, whereas all we have is a stretched exponential. In the proof below we show that while the main result of \cite{DT} does not apply, the method itself can still be used to establish the failure of \eqref{eq:edl} as soon as the spectral projection $\chi_J(H)$ is removed.
\end{remark}

\begin{proof}
Note  that individual transfer matrices of the Schr\"odinger cocycle can be rewritten as
\[
\Pi_{j, E}=
\left(
               \begin{array}{cc}
                 1 & E-V(j) \\
                 0 & 1 \\
               \end{array}
             \right)
\left(
               \begin{array}{cc}
                 0 & -1 \\
                 1 & 0 \\
               \end{array}
             \right);
\]
for $E=0$ and $V(j)=\xi_{j+1}-\xi_{j}$ this can be rewritten as
\[
\Pi_{j, 0}=
\left(
               \begin{array}{cc}
                 1 & -\xi_{j+1} \\
                 0 & 1 \\
               \end{array}
             \right)
\left(
               \begin{array}{cc}
                 1 & \xi_j \\
                 0 & 1 \\
               \end{array}
             \right)
\left(
               \begin{array}{cc}
                 0 & -1 \\
                 1 & 0 \\
               \end{array}
             \right).
\]
Let
\[
R=\left(
               \begin{array}{cc}
                 0 & -1 \\
                 1 & 0 \\
               \end{array}
             \right), \quad
P_a=\left(
               \begin{array}{cc}
                 1 & a \\
                 0 & 1 \\
               \end{array}
             \right),
\]
and consider the conjugates
$$
R_a:=P_a R P_a^{-1} = \left(
               \begin{array}{cc}
                 a & -a^2-1 \\
                 1 & -a \\
               \end{array}
             \right).
$$

\newpage

Then the transfer matrix $T_{n,\omega,E}$ for $E=0$ can be rewritten as
\begin{align}\label{eq:R-xi}
T_{n,\omega,0} & = (P_{-\xi_{n+1}} P_{\xi_n} R) \cdot (P_{-\xi_{n}} P_{\xi_{n-1}} R) \cdot (P_{-\xi_{n-1}} P_{\xi_{n-2}} R) \dots (P_{-\xi_{2}} P_{\xi_1} R)
\\
\nonumber & = P_{-\xi_{n+1}} \cdot (P_{\xi_n} R P_{\xi_{n}}^{-1}) \cdot (P_{\xi_{n-1}} R P_{\xi_{n-1}}^{-1}) \dots  (P_{\xi_1} R P_{\xi_1}^{-1}) \cdot P_{\xi_1}
\\
\nonumber & = P_{-\xi_{n+1}} \cdot R_{\xi_n} R_{\xi_{n-1}} \dots R_{\xi_1} \cdot P_{\xi_0},
\end{align}
thus up to two bounded factors it is a product of independently chosen $R_{\xi_j}$.

Now, the matrices $R_0$, $R_1$ are conjugate to $R$, so their squares are equal to minus the identity. Hence, {when writing a product of factors of the form} $R_0$ and $R_1$, if the same matrix appears twice in a row, up to a sign it can be cancelled out. The length of the non-cancelled word thus changes by $+1$ and $-1$ equiprobably. Hence, the log-norm of $T_{n,\omega,0}$ can be estimated as 
\begin{equation}\label{eq:log-norm-T-S}
\log \|T_{n,\omega,0} \| \le 2C_P + C_R |S_{n,\omega}|,
\end{equation}
where $S_{n,\omega}$ is an equiprobable $+1/-1$ random walk on $\Z$, and
\[
C_P :=  \max_{a=0,1} (\log \|P_a\|), \quad C_R :=  \max_{a=0,1} (\log \|R_a\|).
\]

Applying the law of iterated logarithm to $S_n$ (see, for instance, \cite[Chapter~8, Theorem~1.1]{G}), we get that almost surely for all $n$ sufficiently large one has
\begin{equation}\label{eq:S-log}
|S_{n,\omega}| < c \sqrt{n \log \log n},
\end{equation}
where $c>1$ is a constant (for instance, one can take $c=2$). Dividing~\eqref{eq:log-norm-T-S} by $n$ and using~\eqref{eq:S-log}, we thus get that almost surely
\[
\lim_{n\to\infty} \frac{1}{n} \log \| T_{n,\omega,0}\| =0,
\]
thus concluding the proof of the first part of Proposition \ref{p.example}.

Let us address the second part of the proposition. Recall that $T_{[m,n],\omega,E}$ denotes the transfer matrix from $m$ to $n$ (and random parameter $\omega$ and energy $E = 0$), and observe that
\[
T_{[m,n],\omega,0} = T_{n,\omega,0}T_{m-1,\omega,0}^{-1}, \quad \|T_{m-1,\omega,0}^{-1}\| = \|T_{m-1,\omega,0}\|.
\]
Taking a sufficiently large constant $C'$, we get from~\eqref{eq:log-norm-T-S}, joined with~\eqref{eq:S-log}, that almost surely for all sufficiently large $N$,
\begin{equation}
\forall n, \quad |n|\le N \quad \log \|T_{n,\omega,0}\| \le C' \sqrt{N \log\log N},
\end{equation}
and hence that
\begin{equation}\label{e.strexpbound}
\forall m,n, \quad |m|,|n|\le N \quad \log \|T_{[m,n],\omega,0}\| \le 2C' \sqrt{N \log\log N}.
\end{equation}

Now, let us extend the above upper bound for the norms to close enough energy levels. Namely, recall the following estimate from \cite{DT}:

\begin{lemma}[Lemma~2.1 in \cite{DT}]\label{first}
Let $E \in {\R}, N>0$. Define
$$
K(N)=\sup_{|n|\le N, |m| \le N} \|T_{[m,n],\omega,E}\|
$$
in the case of $\ell^2(\Z)$ and
$$
L(N)=\sup_{1 \le n,m \le N} \|T_{[m,n],\omega,E}\|
$$
in the case of $\ell^2(\N)$. Let $\delta \in \C$. Then, the following bounds hold:
\begin{align*}
\|T_{n,\omega,E+\delta}\| & \le K(N) \exp (K(N) |n| |\delta|), \  |n|\le N,   
\end{align*}
in the case of $\ell^2(\Z)$ and
$$
\|T_{n, \omega,E+\delta}\| \le L(N) \exp (L(N) |n| |\delta|), \ 1 \le n \le N, \ \
$$
in the case of $\ell^2(\N)$.
\end{lemma}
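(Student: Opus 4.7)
The plan is to prove the bound by a direct term-by-term expansion of $T_{n,\omega,E+\delta}$ in powers of the perturbation $\delta$. The starting observation is that
\[
\Pi_{j,E+\delta}-\Pi_{j,E} \;=\; B \;:=\; \begin{pmatrix} \delta & 0 \\ 0 & 0 \end{pmatrix},
\]
so the single-step perturbation is the same constant matrix at every site and satisfies $\|B\|=|\delta|$. For $1\le n\le N$, I would write
\[
T_{n,\omega,E+\delta} \;=\; \prod_{j=n}^{1}\bigl(\Pi_{j,E}+B\bigr)
\]
and expand into a sum of $2^n$ products, one for each subset $S\subseteq\{1,\dots,n\}$ of positions at which the factor $B$ is chosen.

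For $S=\{j_1<\cdots<j_k\}$, the corresponding summand factors, respecting the original left-to-right ordering, as
\[
T_{[j_k+1,\,n],\omega,E}\cdot B\cdot T_{[j_{k-1}+1,\,j_k-1],\omega,E}\cdot B\cdots B\cdot T_{[1,\,j_1-1],\omega,E},
\]
where empty index intervals (two adjacent $B$'s, or $j_1=1$, or $j_k=n$) are interpreted as the identity. Since $1\le n\le N$, every such block has endpoints inside $\{1,\dots,N\}\subset\{-N,\dots,N\}$ and is therefore bounded in norm by~$K(N)$. Counting $\binom{n}{k}$ subsets of size~$k$ gives
\[
\|T_{n,\omega,E+\delta}\| \;\le\; \sum_{k=0}^{n}\binom{n}{k}K(N)^{k+1}|\delta|^k \;=\; K(N)\bigl(1+K(N)|\delta|\bigr)^n \;\le\; K(N)\,e^{nK(N)|\delta|},
\]
which is exactly the claim for positive $n$.

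For $-N\le n\le -1$ I would repeat the same scheme after one algebraic adjustment: use $\Pi_{j,E+\delta}^{-1}=\Pi_{j,E}^{-1}+B'$ with $B':=\mathrm{diag}(0,\delta)$, $\|B'\|=|\delta|$, expand the product of $|n|$ inverse single-step transfer matrices, and note that the resulting blocks are inverses of transfer matrices $T_{[a,b],\omega,E}$. Since $\|A^{-1}\|=\|A\|$ for every $A\in\SL(2,\R)$ (its two singular values are reciprocals), each block is still bounded by~$K(N)$, and the identical binomial computation yields the bound with~$|n|$ in place of~$n$. The half-line case runs verbatim, with $1\le m,n\le N$ and $L(N)$ in place of $K(N)$. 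Throughout, the only thing to check is that every index interval appearing in the expansion lies inside the range defining $K(N)$ (resp.\ $L(N)$), which is immediate from $|n|\le N$ (resp.\ $1\le n\le N$); there is no other obstacle in the argument.
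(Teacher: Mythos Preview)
Your argument is correct. The binomial expansion of $\prod (\Pi_{j,E}+B)$ with $B=\mathrm{diag}(\delta,0)$, the identification of each block as some $T_{[a,b],\omega,E}$ with endpoints in the admissible range, and the resulting bound $K(N)(1+K(N)|\delta|)^{|n|}\le K(N)e^{K(N)|n||\delta|}$ are all sound. The treatment of negative $n$ via $\Pi_{j,E+\delta}^{-1}=\Pi_{j,E}^{-1}+\mathrm{diag}(0,\delta)$ together with $\|A^{-1}\|=\|A\|$ for $A\in\SL(2,\R)$ is also correct, and the half-line case is indeed identical with $L(N)$ in place of~$K(N)$.

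As for the comparison: the present paper does not give its own proof of this lemma; it is quoted verbatim as Lemma~2.1 of~\cite{DT} and used as a black box. Your combinatorial expansion is precisely the standard proof of this estimate (and is, in essence, the argument in~\cite{DT}): one writes the perturbed product as the unperturbed product plus correction terms, each of which is a chain of unperturbed blocks separated by rank-one perturbations of size~$|\delta|$, and then sums. An equivalent way to organize the same computation is via a discrete Gronwall/Duhamel iteration, writing $T_{n,E+\delta}=T_{n,E}+\sum_{j} T_{[j+1,n],E}\,B\,T_{j-1,E+\delta}$ and iterating; unwinding that recursion reproduces exactly your sum over subsets. So there is no genuine methodological difference to report.
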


For the $\omega$'s in question we therefore have $\log K(N) \le 2C' \sqrt{N \log \log N}$ and $\log L(N) \le 2C' \sqrt{N \log \log N}$ due to~\eqref{e.strexpbound}. For definiteness, let us discuss the whole line case (the modifications for the half line case are easy).

We will fix a sufficiently large time scale $T$, and will choose a large $N$ to be related to it by
\begin{equation}\label{eq:N-T}
N = N(T) := (\log T)^{1.9}
\end{equation}
(we will explain this choice later; actually, the exponent $1.9$ can be replaced by any number in the open interval $(1,2)$).

Now, we are going to estimate the part of the time-averaged norm of the initial atomic vector $\delta_1$ that was transported in this time scale outside the box $\{|n| < N\}$. Namely, consider the integral
\begin{equation}\label{eq:transported}
\sum_{|n| \ge N} \frac{1}{T} \int_0^\infty e^{- \frac{2t}{T}} \left| \langle \delta_n , e^{-itH_\omega} \delta_1 \rangle \right|^2 \, dt.
\end{equation}

The choice~\eqref{eq:N-T} ensures that  $K(N) N \lesssim T$; hence, taking $\delta$ of order $|\delta| \sim \frac{1}{T}$,
we get that the upper estimate in Lemma~\ref{first} does not exceed
\begin{equation}\label{e.K(N)bound}
K(N) \exp (K(N) N |\delta|) \lesssim e^{C \sqrt{N \log \log N}}.
\end{equation}
%
%
With these estimates in place, one can now mimic the steps in the proof of \cite[Theorem~1]{DT} to obtain a lower bound for the averaged transported mass~\eqref{eq:transported}:\footnote{In this derivation, $E + \delta$ will be of the form $E' + \frac{i}{T}$, $E' \in \R$, $|E'-E| \le \frac{1}{T}$. The steps of the derivation are the same -- the difference is that the upper bound $N^\alpha$ used in \cite{DT} needs to be replaced by $e^{C \sqrt{N \log \log N}}$ (compare \cite[Equation~(36)]{DT} and \eqref{e.K(N)bound} above), which has the result that the factor $N^{-2\alpha}$ in \cite[Equation~(41)]{DT} has to be replaced by $e^{-2C \sqrt{N \log \log N}}$.}
$$
\sum_{|n| \ge N} \frac{1}{T} \int_0^\infty e^{- \frac{2t}{T}} \left| \langle \delta_n , e^{-itH_\omega} \delta_1 \rangle \right|^2 \, dt \gtrsim \frac{N}{T} e^{- 2 C \sqrt{N \log \log N}}.
$$
Now,~\eqref{eq:N-T} implies that $\sqrt{N \log \log N} <\log T$ for all sufficiently large $T$, and hence
\begin{equation}\label{e.lowerbound}
\sum_{|n| \ge (\log T)^{1.9}} \frac{1}{T} \int_0^\infty e^{- \frac{2t}{T}} \left| \langle \delta_n , e^{-itH_\omega} \delta_1 \rangle \right|^2 \, dt \gtrsim \frac{(\log T)^{1.9}}{T^{1 + 2 C}} .
\end{equation}

The estimate~\eqref{e.lowerbound} implies that the bound \eqref{eq:edl} in the conclusion of Theorem~\ref{t.dynloc} cannot hold without the spectral projection $\chi_J(H)$. Indeed, otherwise one would have almost surely that
%
$$
\sup\limits_{t \in \R}
	|\langle\delta_n, e^{-itH} \delta_1 \rangle| \lesssim e^{-\beta|n-1|}.
$$
Squaring, summing over $|n|\ge N$ and integrating with the weight $\frac{1}{T} e^{-\frac{2t}{T}}$, we see that this would imply an exponential upper bound for the time-averaged transported mass:
\begin{equation}\label{e.upperbound}
\sum_{|n| \ge (\log T)^{1.9}} \frac{1}{T} \int_0^\infty e^{- \frac{2t}{T}} \left| \langle \delta_n , e^{-itH_\omega} \delta_1 \rangle \right|^2 \, dt \lesssim e^{- 2 \beta (\log T)^{1.9}}.
\end{equation}
And the upper bound~\eqref{e.upperbound} is incompatible with the lower bound~\eqref{e.lowerbound} for $T$ large enough, since
\[
e^{- 2 \beta (\log T)^{1.9}} = o \left(\frac{(\log T)^{1.9}}{T^{1 + 2 C}}\right)
\]
as $T\to\infty$.
\end{proof}

\section*{Acknowledgments}
The authors are grateful to S.\,Jitomirskaya for useful discussions and several suggestions and remarks on the first draft of the text that allowed us to enhance the presentation.  

\end{document}